\newtheorem{theorem}{Theorem}[section]
\newtheorem{thm}[theorem]{Theorem}
\newtheorem{lem}[theorem]{Lemma}
\newtheorem{prop}[theorem]{Proposition}
\newtheorem{cor}[theorem]{Corollary}
\theoremstyle{definition}
\theoremstyle{remark}
\DeclareMathOperator{\van}{\mathscr{V}}
\DeclareMathOperator{\nv}{\mathscr{N}}
\DeclareMathOperator{\pv}{{\mathscr{P}}_{\textit{v}}}
\DeclareMathOperator{\pnv}{\mathcal{P}_{\textit{n}}}
\DeclareMathOperator{\Aut}{Aut}
\DeclareMathOperator{\Irr}{Irr}
\DeclareMathOperator{\Cl}{Cl}
\DeclareMathOperator{\kernel}{Ker}
\newcommand{\la}{\lambda}
\newcommand{\Syl}{{\mathrm {Syl}}}
\DeclareMathOperator{\Sym}{\mathbf S}
\newcommand{\cd}{{\mathrm {cd}}}
\newcommand{\OO}{\mathbf{O}}
\newcommand{\Centralizer}{\mathbf{C}}
\newcommand{\Center}{\mathbf{Z}}
\newcommand{\Normalizer}{\mathbf{N}}
\def\Md#1{\text{ }(\text{\rm mod } #1)\,}
\numberwithin{equation}{section}
\newcommand{\Alt}{{\mathbf  {A}}}
\newcommand{\Out}{{\mathrm {Out}}}
\begin{document}

\title[Vanishing elements]{Proportions of vanishing elements in finite groups}

\author[L.  Morotti]{Lucia Morotti}
\address{Institut f\"{u}r Algebra, Zahlentheorie und Diskrete Mathematik, Leibniz Universit\"{a}t Hannover, 30167 Hannover, Germany}
\email{morotti@math.uni-hannover.de}

\author[H. P. Tong-Viet]{Hung P. Tong-Viet}
\address{Department of Mathematical Sciences, Binghamton University, Binghamton, NY 13902-6000, USA}
\email{tongviet@math.binghamton.edu}

\thanks{The first author was supported by the DFG grant MO 3377/1-2.}

\subjclass[2020]{Primary 20C15; Secondary 20C30}

\begin{abstract}
    In this paper, we study the proportion of vanishing elements of finite groups. We show that the proportion of vanishing elements of every finite non-abelian group is bounded below by $1/2$ and classify  all finite groups whose proportions of vanishing elements attain this bound. For symmetric groups of degree at least $5$, we show that this bound is at least $2327/2520$ which is best possible.

\end{abstract}
\maketitle
\tableofcontents


\section{Introduction}
Let $G$ be a finite group. An element $g\in G$ is called a \emph{vanishing element}  of $G$  if there exists an irreducible complex character $\chi$ of $G$ such that $\chi(g)=0$. In this case, $g$ is said to be a \emph{zero} of $\chi$. Let $\van(G)$ denote the set of all vanishing elements of $G$. Elements in the set $\nv(G)=G-\van(G)$ are called \emph{non-vanishing elements} of $G$. A classical result by Burnside states that every non-linear complex irreducible character of a finite group must vanish at some element of the group. As a consequence, if $G$ is non-abelian, then $\van(G)$ is non-empty. Zero of characters is an interesting topic in character theory. For more results concerning the influence of zeros of characters to the structure of the groups, see the recent survey paper \cite{DPS}.

In this paper, we are interested in studying the proportions of vanishing and non-vanishing elements in finite groups. To be more precise, for a finite group $G$, we  call $\pv(G)=|\van(G)|/|G|$ the proportion of vanishing elements of $G$; and  $\pnv(G)=|\nv(G)|/|G|$  the proportion of non-vanishing elements of $G$. Clearly $\pv(G)+\pnv(G)=1$ for any finite group $G$. Obviously, $\pv(G)=0$ if $G$ is abelian. If $G$ is non-abelian, then by the result of Burnside's mentioned above, $\pv(G)>0$. In general, since the identity element is a non-vanishing element of any finite group $G$, $\van(G)$ is always a proper subset of $G$ and thus $0\leq \pv(G)<1.$ 

Let $p$ be a fixed prime. Let $P_n$ denote an extra-special group of order $p^{2n+1}$, where $n\ge 1$ is an integer. Then $\van(P_n)=P_n-\Center(P_n)$ as $|\Center(P_n)|=p$ and so $\pv(P_n)=1-1/p^{2n}$. Hence $\pv(P_n)\rightarrow 1$ as $n\rightarrow \infty.$ Next, let $G$ be a fixed non-abelian group. Then $0<\pnv(G)<1$ and thus by applying Lemma \ref{lem1} repeatedly, we have  $\pnv(G^k)=\pnv(G)^k\rightarrow 0$ when $k$ approaches infinity which implies that $\pv(G^k)\rightarrow 1$ as $k\rightarrow \infty.$ On the other hand, we determine a lower bound for $\pv(G)$ when $G$ is non-abelian.

\begin{thm}\label{th:lower bound} The proportion of vanishing elements in a finite non-abelian group is at least $1/2.$
\end{thm}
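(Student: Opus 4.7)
The plan is to argue by induction on $|G|$, reducing via inflation of characters to the case where $G'=[G,G]$ is the unique minimal normal subgroup of $G$, and then splitting on whether $G'$ is abelian or not.

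The first step is a short inflation lemma: for any nontrivial $N\trianglelefteq G$ and any $\bar\chi\in\Irr(G/N)$ vanishing at $gN$, the inflation of $\bar\chi$ to $G$ vanishes on the entire coset $gN$, so $\pv(G)\geq \pv(G/N)$. Hence by induction, if some proper nontrivial quotient of $G$ is non-abelian then $\pv(G)\geq 1/2$; so I may assume every such quotient is abelian, which is equivalent to $G'$ being the unique minimal normal subgroup of $G$.

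Consider first the case where $G'$ is abelian, so $V:=G'$ is elementary abelian $p$ for some prime $p$. Since $V\cap Z(G)$ is a $G$-invariant subgroup of $V$, by minimality it equals $V$ or is trivial; in the latter case, the uniqueness of $V$ as minimal normal forces $Z(G)=1$. If $V\not\leq Z(G)$ (equivalently $Z(G)=1$), Clifford theory shows that every non-linear $\chi\in\Irr(G)$ has the form $\Ind_{I_G(\lambda)}^G\theta$ for some nontrivial $\lambda\in\widehat V$, where $I_G(\lambda)=\{g\in G:g\cdot\lambda=\lambda\}$; using that $A=G/V$ is abelian, one checks that $\chi(g)=0$ whenever the image of $g$ in $A$ does not fix $\lambda$. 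Thus for any $g\in G\setminus C_G(V)$ some non-trivial $\lambda$ is moved by $g$ and the corresponding $\chi_\lambda$ vanishes at $g$, so $\nv(G)\subseteq C_G(V)$; and since $G$ acts non-trivially on $V$, $[G:C_G(V)]\geq 2$ and $\pv(G)\geq 1/2$. If instead $V\leq Z(G)$, then every subgroup of $V$ is $G$-invariant so minimality forces $V\cong\bbZ/p$, uniqueness of the minimal normal subgroup forces $Z(G)$ to be a cyclic $p$-group, and $G$ becomes a $p$-group of class $2$. For a non-linear $\chi$ the central character $\omega_\chi$ is injective on $V$, and for $g\notin Z(G)$ any $h\in G$ with $z:=[g,h^{-1}]\neq 1$ in $V$ yields $\chi(hgh^{-1})=\omega_\chi(z)\chi(g)=\chi(g)$, forcing $\chi(g)=0$; hence $\nv(G)\subseteq Z(G)$, and $G/Z(G)$ being non-cyclic abelian gives $|G/Z(G)|\geq 4$ and $\pv(G)\geq 3/4$.

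If $G'$ is non-abelian, then $G'=T_1\times\cdots\times T_k$ with $T_i$ isomorphic non-abelian simple; since $Z(G')=1$, the uniqueness hypothesis gives $C_G(G')=1$, so $G$ embeds in $\Aut(T_1)\wr S_k$ with $G/G'$ abelian. I would handle the almost simple case $k=1$ using the character tables of almost simple groups (where $\pv$ is in fact close to $1$), and then bootstrap to $k\geq 2$ using the transitive (hence, being abelian, regular) action of $G/G'$ on $\{T_1,\ldots,T_k\}$ to decompose $\Irr(G)$. This case is the main obstacle: the clean Clifford-theoretic dispatch of the abelian case breaks down, and one must rely on the classification of finite simple groups and delicate character-theoretic input specific to each family.
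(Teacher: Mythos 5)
Your reduction to the case where $G'$ is the unique minimal normal subgroup is exactly the paper's first step (their Lemma \ref{lem:quotient} is your inflation lemma), and your treatment of the case where $G'$ is abelian is correct and slightly more self-contained than the paper's: where you split on $V\leq\Center(G)$ versus $V\not\leq\Center(G)$ and argue directly with Clifford theory and central characters, the paper instead quotes Isaacs's Lemma 12.3 to conclude that $G$ is either a non-abelian $p$-group or a Frobenius group with kernel $G'$, and then reads off the bound from the vanishing sets in those two structures. The two arguments give the same bounds ($\geq 3/4$ in the central case, $\geq 1-1/[G:C_G(V)]=1-1/|G:G'|\geq 1/2$ in the other), and yours has the mild advantage of not needing to identify the Frobenius structure explicitly.

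The genuine gap is the case where $G'$ is non-abelian, which you explicitly leave as a plan rather than a proof. ``Using the character tables of almost simple groups'' is not an argument: there are infinitely many of them, and the paper spends all of Section \ref{sec2} on precisely this point. The actual inputs are: Granville--Ono's theorem that, outside an explicit list of exceptions, a simple group has a $p$-defect zero character for every prime $p$ dividing its order (so that $\van(S)=S-\{1\}$ via Brauer's vanishing theorem); a hook-length and $r$-core analysis to handle $\Sym_n$ and $\Alt_n$, which are among the exceptions; Feit--Seitz plus Brauer's permutation lemma to produce a non-$G$-invariant character of the socle when $G$ is almost simple but not simple; and GAP checks for a few sporadic groups. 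Your $k\geq 2$ ``bootstrap'' is likewise only named, not carried out (the paper's Proposition \ref{prop:perfect derived subgroup} does this via inertia groups and extendibility of $G$-invariant characters along the cyclic quotient $G/G'$). One remark that would have shortened your task: for the bound $1/2$ alone (as opposed to the $3/4$ the paper actually proves in this case), whenever $G\neq G'$ a single non-$G$-invariant $\theta\in\Irr(G')$ already yields a character induced from a proper normal subgroup $I\supseteq G'$ of index at least $2$, vanishing on $G-I$, so $\pv(G)\geq 1/2$ at once; the only subcase that truly requires the classification-dependent machinery is then $G=G'$ simple. As written, though, the non-solvable case of your proposal is not a proof.
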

  
This bound is best possible as $\pv(\Sym_3)=1/2.$ 
As a consequence of Theorem \ref{th:lower bound}, we obtain the following characterization of finite abelian groups.

\begin{cor}
Let $G$ be a finite group. Then $G$ is abelian if and only if $\pv(G)<1/2.$
\end{cor}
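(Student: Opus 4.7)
The plan is to handle the two implications separately, with the bulk of the content being a direct appeal to Theorem \ref{th:lower bound}.

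For the forward direction, suppose $G$ is abelian. Then every irreducible complex character $\chi \in \Irr(G)$ is linear, hence takes values in the roots of unity and in particular never vanishes on $G$. Therefore $\van(G) = \emptyset$, so $\pv(G) = 0$, which is certainly strictly less than $1/2$.

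For the converse, I would proceed by contrapositive: suppose $G$ is non-abelian. Then Theorem \ref{th:lower bound} applies and yields $\pv(G) \ge 1/2$, contradicting the assumption $\pv(G) < 1/2$. Hence $G$ must be abelian.

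There is no real obstacle here: the corollary is essentially a restatement of Theorem \ref{th:lower bound} combined with the elementary observation that linear characters are nowhere vanishing. The only minor point worth noting is that the inequality in the corollary is strict ($<1/2$), while the theorem gives a non-strict bound ($\ge 1/2$); this is why the corollary is only an ``if and only if'' for the strict inequality and why examples such as $\Sym_3$, where $\pv(G) = 1/2$ exactly, do not cause trouble.
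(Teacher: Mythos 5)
Your proof is correct and matches the paper's intended derivation: the corollary is stated there as an immediate consequence of Theorem \ref{th:lower bound}, combined with the observation (made in the introduction) that abelian groups have only linear, hence nowhere-vanishing, irreducible characters, so $\pv(G)=0$. Nothing further is needed.
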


For a finite group $G$, we denote by  $\Irr(G)$  the set of complex irreducible characters of a finite group $G$ and $\cd(G)=\{\chi(1):\chi\in\Irr(G)\}$  the set of character degrees of $G$.  Let $k(G)=|\Irr(G)|$ be the number of conjugacy classes of $G$ and $\Cl(G)$ be the set of conjugacy classes of $G$. Let $P(G)=|\mathcal{A}|/k(G)^2$, where $\mathcal{A}$ is the set of pairs $(\chi,g^G)$ in $\Irr(G)\times \Cl(G)$ with $\chi(g)=0$. The fraction $P(G)$ is called the proportion of zeros in the character table of $G$.  Miller \cite{Miller} showed that the set $\{P(G): $ G$ \text{ is a finite group}\}$ is dense in the interval $[0,1]$. Thus the proportion of zeros in the character table behaves differently than the proportion of vanishing elements, since for example there is no finite group $G$ with $0<\pv(G)<1/2$.

In our next theorem, we completely describe the structure of finite groups $G$ with $\pv(G)=1/2.$ 

\begin{thm}\label{th: classification 1/2}
Let $G$ be a finite group. Then $\pv(G)=1/2$ if and only if $G/\Center(G)$ is a Frobenius group with kernel $F/\Center(G)$, where $F$ is abelian and $|G:F|=2$.
\end{thm}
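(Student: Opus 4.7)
The plan is to prove the two implications of the theorem separately. The `if' direction is essentially a Clifford-theoretic computation, while the `only if' direction relies on Theorem~\ref{th:lower bound} together with structure theorems about non-vanishing elements in solvable groups.

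For the `if' direction, assume $G/\Center(G)$ is Frobenius with kernel $F/\Center(G)$, $F$ abelian, and $|G:F|=2$. Because $F$ is abelian of index $2$, Clifford theory tells us that every $\chi\in\Irr(G)$ is either linear or of the form $\chi=\Ind_F^G\lambda$, where $\lambda$ is a linear character of $F$ not fixed by the nontrivial coset $\sigma\in G/F$; in the latter case $\chi(1)=2$. The induced-character formula immediately gives $\chi(g)=0$ whenever $g\notin F$, so $G\setminus F\subseteq\van(G)$. For the reverse inclusion, fix $g\in F$ and $\chi=\Ind_F^G\lambda$ as above. If $g\in\Center(G)$, then $\chi(g)=2\lambda(g)\ne 0$. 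Otherwise, since $G/\Center(G)$ is Frobenius with complement of order $2$, the complement acts on the kernel by inversion, so $g^\sigma=g^{-1}z$ for some $z\in\Center(G)$. A short computation then gives $\lambda(g)\chi(g)=\lambda(g)^2+\lambda(z)$, which vanishes if and only if the character $\mu:=\lambda(\lambda^\sigma)^{-1}$ of $F$ satisfies $\mu(g)=-1$. But $\mu$ is trivial on $\Center(G)$ (as $\sigma$ fixes $\Center(G)$) and hence factors through $F/\Center(G)$, which has odd order (any Frobenius kernel admitting a fixed-point-free involution must). Thus $\mu(g)$ is an odd-order root of unity, so $\mu(g)\ne-1$. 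This shows $\van(G)=G\setminus F$ and $\pv(G)=1/2$.

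For the `only if' direction, assume $\pv(G)=1/2$. The first step is to produce a normal subgroup $F$ of index $2$ with $\nv(G)=F$. Since non-abelian $p$-groups satisfy $\nv(G)=\Center(G)$ (a classical fact, yielding $\pv\ge 1-1/p^2\ge 3/4$), and nilpotent non-abelian groups reduce to this via the direct product of their Sylow subgroups, $G$ cannot be nilpotent. I would then argue that $G$ is solvable — non-solvable groups force $\pv$ far above $1/2$, as easily verified on almost-simple sections — and invoke the Dolfi--Pacifici--Sanus--Spiga theorem placing non-vanishing elements of a solvable group inside $\Fit(G)$. Combined with $|\nv(G)|=|G|/2$ and $\Fit(G)<G$, this forces $[G:\Fit(G)]=2$ and $\nv(G)=F:=\Fit(G)$.

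The main obstacle is then proving that $F$ is abelian. If not, Burnside's theorem gives a non-linear $\mu\in\Irr(F)$ with a zero $g_0\in F$. If $\mu^\sigma=\mu$, then $\mu$ extends to some $\chi\in\Irr(G)$ with $\chi(g_0)=0$, placing $g_0$ in $\van(G)\cap F=\emptyset$ — contradiction. Hence every non-linear $\mu\in\Irr(F)$ satisfies $\mu^\sigma\ne\mu$, so that $\Ind_F^G\mu\in\Irr(G)$ and must not vanish anywhere on $F$; equivalently, the zero-sets of $\mu$ and $\mu^\sigma$ must be disjoint in $F$ for every non-linear $\mu\in\Irr(F)$. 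Deriving a contradiction from this rigid pairing — exploiting the Sylow decomposition of the nilpotent group $F$ and the action of $\sigma$ on $\Irr(F)$ — is the technical heart of the argument. Once $F$ is known to be abelian, every non-linear $\chi\in\Irr(G)$ is of the form $\Ind_F^G\lambda$ with $\lambda^\sigma\ne\lambda$ and vanishes on $G\setminus F$; running the computation of the `if' direction in reverse, the requirement that $\chi(g)\ne 0$ for every $g\in F$ and every such $\lambda$ forces the character $\mu_\lambda:=\lambda(\lambda^\sigma)^{-1}$ of $F/\Center(G)$ never to take the value $-1$. Letting $\lambda$ range over all non-$\sigma$-invariant linear characters of $F$ then compels $F/\Center(G)$ to have odd order and $\sigma$ to act on $F/\Center(G)$ without non-identity fixed points, exhibiting $G/\Center(G)$ as a Frobenius group with kernel $F/\Center(G)$, as required.
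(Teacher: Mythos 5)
Your ``if'' direction is essentially correct and runs along the same lines as the paper's Lemma \ref{lem3} and Lemma \ref{lem:abelian}: since the Frobenius kernel $F/\Center(G)$ is a normal Hall subgroup it has odd order, the complement inverts it, and the sum of the two conjugate root-of-unity values cannot vanish. The ``only if'' direction, however, has two genuine gaps. First, you invoke ``the Dolfi--Pacifici--Sanus--Spiga theorem placing non-vanishing elements of a solvable group inside $\Fit(G)$.'' No such theorem exists in that generality: Dolfi--Pacifici--Sanus--Spiga prove the containment only for non-vanishing elements of order coprime to $6$, and Isaacs--Navarro--Wolf prove only that a non-vanishing element of a solvable group has $2$-power order modulo $\Fit(G)$. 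Since the elements you cannot control are exactly the $2$-elements modulo $\Fit(G)$, and $|G:F|=2$ is the case at hand, this is not a removable technicality; your derivation of $[G:\Fit(G)]=2$ and $\nv(G)=\Fit(G)$ is unsupported. (Your solvability claim, ``easily verified on almost-simple sections,'' also compresses what in the paper is Proposition \ref{prop:perfect derived subgroup}, which rests on the symmetric-group bound, Granville--Ono, and Feit--Seitz.)

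Second, and more seriously, the abelianness of $F$ --- which you yourself call ``the technical heart of the argument'' --- is left unproved. Your reformulation is also not quite right: from $\mu^\sigma\neq\mu$ and $(\mu^G)|_F=\mu+\mu^\sigma$ never vanishing on $F$ you need to exclude both common zeros of $\mu,\mu^\sigma$ \emph{and} points where $\mu(g)=-\mu^\sigma(g)\neq 0$, not merely that the zero-sets are disjoint. The paper closes this gap by a quantitative argument (Lemma \ref{lem4}): if $F$ is non-abelian, Corollary \ref{cor1} yields a maximal normal subgroup $U$ of $F$ of prime index $s$ and $\psi\in\Irr(F)$ vanishing on $F-U$; either $\psi$ extends to $G$ or $\psi^G\in\Irr(G)$ vanishes on $F-(U\cup U^h)$, and counting gives $\pv(G)\ge \tfrac12+\tfrac{(s-1)^2}{2s^2}\ge \tfrac{13}{18}>\tfrac12$ (with a separate $2$-group argument when $s=2$), contradicting $\pv(G)=1/2$. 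Note also that the paper never needs $\nv(G)=\Fit(G)$: it passes to a quotient $G/N$ that is minimal non-abelian, identifies it as a Frobenius group of order $2\cdot(\text{odd})$ using Theorem \ref{th:lower bound} and Lemma \ref{lem:minimal nonabelian}, and only then pulls the structure back. Some argument of this counting type (or an equivalent substitute) is needed to make your outline into a proof.
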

Consequently, if $G$ is a finite group with $\pv(G)=1/2$, then $G$ is metabelian and $\cd(G)=\{1,2\}.$ 

Recall that a finite group $G$ is an \emph{almost simple group} with a non-abelian simple socle $S$ if $S\unlhd G\leq \Aut(S)$. In the next two theorems, we determine the lower bounds for the proportions of vanishing elements for symmetric groups and some other almost simple groups (and some closely related groups). Note that these bounds are best possible.

\begin{thm}\label{th: symmetric groups} Let $n\ge 1$ be an integer. Then

\begin{enumerate}[$(i)$]
\item $\pv(\Sym_n)=0$, if $n=1,2$;
\item  $\pv(\Sym_n)=1/2$, if $n=3$;
\item  $\pv(\Sym_n)=5/6$, if $n=4$;
\item  $\pv(\Sym_n)\geq \pv(\Sym_7)=2327/2520$, if $n\ge 5$;
\end{enumerate}
\end{thm}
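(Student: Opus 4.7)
My plan is to split by $n$ and handle each range with increasingly general tools. Parts (i)--(iii) follow by direct inspection of the character tables of $\Sym_n$ for $n\leq 4$: the cases $n=1,2$ are abelian so $\pv=0$; for $\Sym_3$ the unique $2$-dimensional irreducible character vanishes exactly on the three transpositions, giving $\pv(\Sym_3)=1/2$ (this also follows from Theorem~\ref{th: classification 1/2}); and for $\Sym_4$ an inspection of the five irreducible characters shows that all transpositions, $3$-cycles and $4$-cycles are vanishing, yielding $\pv(\Sym_4)=20/24=5/6$.

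For part (iv) I begin with the small cases $n\in\{5,6,7\}$ via direct character table computation. Using the Murnaghan--Nakayama rule (or an explicit character table), one checks that for $n=5$ and $n=6$ the only non-vanishing element is the identity, so $\pv(\Sym_5)=119/120$ and $\pv(\Sym_6)=719/720$; for $n=7$ several cycle types (including classes coming from partitions such as $(3,2,2)$ and $(4,3)$) escape every vanishing criterion, and one obtains exactly $\pv(\Sym_7)=2327/2520$. All three values are at least $2327/2520$, with equality only at $n=7$.

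It then remains to prove the strict bound $\pv(\Sym_n)>2327/2520$ for all $n\geq 8$. The plan is to combine a short family of explicit vanishing conditions showing that almost every cycle type of $\Sym_n$ already forces some $\chi^\lambda$ to be zero. The main ingredients I would use are: the standard character, which gives $\chi^{(n-1,1)}(\sigma)=\mathrm{fix}(\sigma)-1$ and therefore annihilates every $\sigma$ with exactly one fixed point; self-conjugate partitions of $n$ (which exist for every $n\geq 3$), whose contents sum to $0$ and therefore yield characters vanishing on all cycle types containing a transposition by the Frobenius formula for transpositions; and the Murnaghan--Nakayama rule, which forces $\chi^\lambda(\sigma)=0$ whenever $\sigma$ has a cycle of length $m$ such that no $m$-rim hook can be removed from $\lambda$ (this handles, in particular, cycle types with a long cycle but insufficient ``hook budget''). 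Accumulating these conditions reduces non-vanishing candidates to an explicit short list of cycle types, each having uniformly large centralizer; summing the resulting class sizes gives an upper bound on $|\nv(\Sym_n)|$ that is strictly less than $(193/2520)\,n!$ for every $n\geq 8$.

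The main obstacle is the quantitative step in this last range: for $n$ just slightly above $7$ the bound is tight, and the character-theoretic net has to be cast finely enough to distinguish $\pv(\Sym_n)$ from $\pv(\Sym_7)$. I anticipate needing to handle $n\in\{8,9,10\}$ (and perhaps a few more) by very careful case analysis or computer-assisted verification of the character table, and then proving the uniform estimate for all larger $n$ using crude but explicit bounds on the density of permutations with the restricted cycle structures that survive the combined vanishing criteria above.
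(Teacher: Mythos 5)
Your treatment of $n\le 7$ is essentially sound, though one detail is wrong: $(4,3)$ is an \emph{odd} cycle type, hence is a zero of the character labelled by the self-conjugate partition $(4,1,1,1)$ and is not a non-vanishing class of $\Sym_7$; the non-vanishing classes there are $(1^7)$, $(3,1^4)$, $(2^2,1^3)$ and $(3,2^2)$. A more substantive error in your toolkit for $n\ge 8$ is the claim that a character labelled by a self-conjugate partition vanishes ``on all cycle types containing a transposition.'' The Frobenius content formula $\chi^\lambda(2,1^{n-2})/\chi^\lambda(1)=\tbinom n2^{-1}\sum_{(i,j)\in[\lambda]}(j-i)$ applies only to the class of a \emph{single} transposition. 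What a self-conjugate $\lambda$ actually gives is vanishing on all odd classes (since $\chi^\lambda$ is induced from $\Alt_n$), which says nothing about classes such as $(2^2,1^{n-4})$ --- and indeed $(2^2,1^3)$ is non-vanishing in $\Sym_7$, so no such sweeping criterion can exist.

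The more serious gap is that the reduction to ``an explicit short list of cycle types, each having uniformly large centralizer'' is precisely the content of the theorem and is not delivered by the tools you name. To show that every permutation containing an $r$-cycle with $r\ge 4$ is vanishing, you need, for every such $r$ and every $n$, an $r$-core partition of $n$ (then pad its first row by a multiple of $r$ to forbid even a single $r$-cycle via the Murnaghan--Nakayama rule); the existence of $r$-cores for all $r\ge4$ is the Granville--Ono theorem, a genuine input that a generic ``hook budget'' heuristic does not replace, and which fails for $r=2,3$ --- which is exactly why $2$- and $3$-cycles survive. One must then bound the multiplicities $a$ of $3$-cycles and $b$ of $2$-cycles in a non-vanishing element using explicit families of $2$-cores (staircases) and $3$-cores; since these are sparse, one only gets $a,b=O(\sqrt n)$, so the surviving cycle types are not uniformly few and their centralizers are not uniformly large. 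The correct conclusion is that a non-vanishing element moves at most $m_n=3\lfloor 2\sqrt n\rfloor+2\lfloor 3\sqrt{n/2}\rfloor$ points, hence lies in a conjugate of $\Sym_{m_n}$, giving $\pnv(\Sym_n)\le 1/(n-m_n)!$, which beats $193/2520$ only for $n\ge 106$; the range $8\le n\le 105$ (where $n=11,13$ are genuinely delicate) requires sharper core computations together with the parity constraint on $b$ coming from $\nv(\Sym_n)\subseteq\Alt_n$. You explicitly defer this quantitative step, so the argument is a plausible plan rather than a proof.
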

As a consequence of Theorem \ref{th: symmetric groups} and the classification of finite simple groups, we obtain the following.

\begin{thm}\label{th: simple groups} \label{prop:almost simple with abelian quotient}
Let $G$ be a finite group and let $N\unlhd G$. If $G/N$ is almost simple with socle $S/N$ and $G/S$ is abelian, then $\pv(G)\ge \pv(\Alt_7)$.
\end{thm}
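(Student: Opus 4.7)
First reduce to $N = 1$. For any $\chi \in \Irr(G/N)$, its inflation $\tilde\chi$ to $G$ satisfies $\tilde\chi(g) = \chi(gN)$, so $gN \in \van(G/N)$ implies $g \in \van(G)$; hence $\pi^{-1}(\van(G/N)) \subseteq \van(G)$ where $\pi\colon G \twoheadrightarrow G/N$, giving $\pv(G) \geq \pv(G/N)$. It therefore suffices to prove $\pv(G) \geq \pv(\Alt_7)$ under the assumption that $G$ itself is almost simple with socle $S$ and abelian quotient $G/S$.

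\textbf{Case analysis via CFSG.} Apply the classification of finite simple groups to $S$. When $S = \Alt_n$ with $n \geq 5$, the possible $G$ are $\Alt_n$, $\Sym_n$, and (when $n = 6$) the three further extensions of $\Alt_6$ inside $\Aut(\Alt_6)$, since $\Out(\Alt_6) \cong C_2 \times C_2$ is abelian. Theorem~\ref{th: symmetric groups}(iv) handles $\Sym_n$ for $n \geq 5$ comfortably above $\pv(\Alt_7)$, and the small cases $\Alt_5, \Alt_6, \Alt_7$ together with the three exceptional $\Alt_6$-extensions are checked directly from the \textsc{Atlas}. For $\Alt_n$ with $n \geq 8$, Clifford theory relates $\van(\Alt_n)$ to $\van(\Sym_n)$: a non-self-conjugate $\chi^\lambda \in \Irr(\Sym_n)$ restricts irreducibly to $\Alt_n$, so any zero of $\chi^\lambda$ lying in $\Alt_n$ is automatically a zero of an irreducible $\Alt_n$-character; for elements outside the ``split'' classes (those with all distinct odd cycle lengths), one checks that $g \in \van(\Alt_n)$ if and only if $g \in \van(\Sym_n)$, so the strong bound on $\pv(\Sym_n)$ transfers to $\Alt_n$. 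When $S$ is sporadic, $|\Out(S)| \leq 2$ and the inequality is verified directly from the \textsc{Atlas}. When $S$ is of Lie type, the Steinberg character vanishes on every non-semisimple element, giving a strong preliminary lower bound on $\pv(S)$; the small-rank families over small fields ($\PSL_2(q)$, $\PSU_3(q)$, $\Sz(q)$, and so on) need individual verification, and the passage to abelian overgroups of $S$ is controlled either via the Steinberg argument or via bounds on characters of $G$ coming from $\Aut(S)$.

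\textbf{Main obstacle.} The principal difficulty is the $\Alt_n$-step for $n \geq 8$: the Clifford transfer loses the proportion of split-class elements in $\Alt_n$, which is not automatically smaller than $\pnv(\Alt_7) = 386/2520$. One must therefore combine the uniform bound $\pv(\Sym_n) \geq 2327/2520$ with sharper $n$-dependent estimates, or analyze directly how split classes interact with the characters indexed by self-conjugate partitions. A secondary obstacle is the explicit verification for the small-rank Lie type groups over very small fields, where the asymptotic Steinberg bound is least generous and each group requires its own check before the asymptotic regime is safely reached.
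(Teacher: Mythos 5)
Your reduction to $N=1$ matches the paper's, and your treatment of the sporadic groups is fine, but two of the remaining cases have genuine gaps. The most serious is the one you flag yourself: for $S=\Alt_n$ with $n\ge 8$ you never close the argument, and the ``split class'' issue you raise does not in fact require any $n$-dependent estimates. The paper's Lemma \ref{lem: Alternating groups} shows that $\nv(\Sym_n)=\nv(\Alt_n)$ \emph{exactly}: for a self-conjugate $\la$ and $g$ in the split class of cycle type $h(\la)$, the formula of \cite[Theorem 2.5.13]{JK} gives $\chi^\la(g)=t\in\{\pm1\}$ and $\chi^{\la\pm}(g)=\bigl(t\pm\sqrt{t\prod_i h_{ii}}\bigr)/2\neq 0$, so split-class elements are non-vanishing for these characters in \emph{both} groups and nothing is lost in the transfer. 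This yields the identity $\pv(\Alt_n)=2\pv(\Sym_n)-1\ge 2\cdot 2327/2520-1=\pv(\Alt_7)$, which is precisely why the constant $2327/2520$ in Theorem \ref{th: symmetric groups} is the relevant one.

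The second gap is your Lie type case. The Steinberg character vanishes exactly on the $p$-singular elements ($p$ the defining characteristic), and their proportion in a fixed-rank group tends to $0$ as $q\to\infty$ (it is of order $1/q$ already for $\PSL_2(q)$), so it gives a very weak bound, not a ``strong preliminary lower bound''; an asymptotic argument built on it would fail. The paper instead invokes Granville--Ono \cite[Corollary 2]{GO}: every simple group of Lie type has an irreducible character of $p$-defect zero for \emph{every} prime $p$ dividing its order, so every non-identity element is $p$-singular for some such $p$ and hence vanishing, giving $\pv(S)=1-1/|S|\ge 59/60$ with no rank or field case analysis at all. The same theorem says the only exceptions to this argument are the alternating groups and a short explicit list of sporadic groups, which is what organizes the entire proof. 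Finally, your passage from $S$ to $G>S$ is left vague; the paper handles it by showing every $g\in G-S$ is vanishing: by Feit--Seitz \cite[Theorem C]{FS} and Brauer's permutation lemma the outer automorphism induced by $g$ moves some $\theta\in\Irr(S)$, so $g$ lies outside the inertia group $I_G(\theta)$, which is normal because $G/S$ is abelian, and any $\chi\in\Irr(G)$ induced from $I_G(\theta)$ vanishes at $g$.
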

We should point out  that all finite quasi-simple groups (finite perfect groups $L$ with $L/\Center(L)$ being a non-abelian simple groups) are included in Theorem \ref{th: simple groups}.

In \cite{DPS}, it is conjectured that if $G$ is a finite non-solvable group, then $\pv(G)\ge \pv(\Alt_7)=1067/1260\approx 85\%$.  
Theorem \ref{th: simple groups} above verifies this conjecture in some cases. To provide further evidence, we prove the following result.

\begin{thm}\label{th:small}
Let $G$ be a finite group. If $\pv(G)\leq 2/3$, then $G$ is  solvable. Moreover, if $\pv(G)<2/3$, then $G$ is abelian or $\pv(G)=1/2.$
\end{thm}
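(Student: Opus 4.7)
Both assertions rely on the monotonicity $\pv(G/N) \le \pv(G)$ for any $N \trianglelefteq G$: every $\chi \in \Irr(G/N)$ inflates to an irreducible character of $G$ with the same value on $g$ and on the coset $gN$, so the preimage of each vanishing coset of $G/N$ consists of vanishing elements of $G$. In particular, $\pv(G/N) > 2/3$ forces $\pv(G) > 2/3$.

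For the first assertion, assume $G$ is non-solvable; I aim to prove $\pv(G) > 2/3$. Noting $\pv(\Alt_7) = 1067/1260 > 2/3$, the strategy is to locate a normal subgroup chain to which Theorem~\ref{th: simple groups} applies. By the monotonicity we may pass to $G/R$, where $R$ is the solvable radical, and so assume $F^*(G) = E(G) = T_1 \times \cdots \times T_k$ with each $T_i$ non-abelian simple and $\Centralizer_G(E(G)) = 1$. Suppose first that some $G$-orbit on $\{T_1, \dots, T_k\}$ is a singleton $\{T\}$. Setting $N = \Centralizer_G(T)$ and $S = T \cdot N$, we have $G/N$ almost simple with socle $S/N \cong T$; provided $G/S$ (which is the outer-automorphism image) is abelian, Theorem~\ref{th: simple groups} directly yields $\pv(G) \ge \pv(\Alt_7)$. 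The remaining orbit-singleton cases with non-abelian outer image, together with the case where every orbit has size at least two, are handled by Clifford theory applied to the normal subgroup $E(G)$: one shows, generalising the explicit $T \wr \Sym_2$ calculation in which every character of $G$ induced from an asymmetric $\chi \otimes \chi'$ of $T^2$ vanishes off $T^2$, that every element of $G$ acting nontrivially on some simple factor's character orbit is vanishing in $G$. The residual count of non-vanishing elements is thereby confined to a small subgroup, producing $\pv(G) > 2/3$.

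For the second assertion, let $G$ be non-abelian with $\pv(G) < 2/3$. The first assertion forces $G$ to be solvable, and Theorem~\ref{th:lower bound} gives $\pv(G) \ge 1/2$. Suppose for contradiction that $1/2 < \pv(G) < 2/3$ and take $G$ of minimum order with this property. Every proper non-abelian quotient $\bar G = G/N$ then satisfies $\pv(\bar G) \le \pv(G) < 2/3$, forcing $\pv(\bar G) = 1/2$ by minimality; Theorem~\ref{th: classification 1/2} then pins down $\bar G/\Center(\bar G)$ as a Frobenius group with abelian kernel of index two. Pulling this rigid structure back through the chief series of $G$ and analysing the minimal extensions that could push $\pv$ strictly above $1/2$ (the smallest of which, $\Alt_4$, already attains $\pv(\Alt_4) = 2/3$), one forces $G$ itself to be of the Frobenius-over-centre form, hence $\pv(G) = 1/2$, or to satisfy $\pv(G) \ge 2/3$. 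Either way, the assumption $1/2 < \pv(G) < 2/3$ fails.

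The main obstacle is the wreath-product / multi-factor case of the first assertion: no almost-simple quotient of $G$ is available, and one must argue directly with Clifford theory that elements permuting the simple factors are vanishing in $G$, ensuring the bookkeeping survives the passage from the quasi-simple setting covered by Theorem~\ref{th: simple groups} to the genuine wreath-type structure. A secondary difficulty is ruling out the gap $(1/2, 2/3)$ in the second assertion: the rigid description of Theorem~\ref{th: classification 1/2} governs only proper quotients, so a small case analysis is needed to exclude the handful of sporadic solvable extensions whose $\pv$-values could a priori lie in the interval.
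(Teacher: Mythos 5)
Your overall plan (bound $\pv$ from below by confining non-vanishing elements to a normal subgroup via Clifford theory, then handle the solvable case by quotient analysis) points in the right direction, but both halves have genuine gaps at exactly the places where the real work lies. In the non-solvable case, your key step is that the non-vanishing elements are ``confined to a small subgroup, producing $\pv(G)>2/3$.'' This does not follow: if that subgroup has index $2$ or $3$ (e.g.\ the index of $E(G)$, or of $G'$, in $G$ can be $2$), confinement alone gives only $\pv(G)\ge 1/2$ or $2/3$, not a strict bound above $2/3$. The paper's Proposition~\ref{prop:perfect derived subgroup} has to do genuine extra work precisely in the case $|G:G'|\le 3$: it produces additional vanishing elements \emph{inside} the derived subgroup, using Lemma~\ref{lem:defect 0} and the fact that a $G$-invariant character of $G'$ extends when $G/G'$ is cyclic, to push the count past $3/4$. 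Moreover, your claim that ``every element acting nontrivially on some simple factor's character orbit is vanishing'' relies on induced characters vanishing off the inertia group; that is valid only when $I_G(\theta)$ is \emph{normal} (otherwise $\varphi^G$ vanishes only off the union of conjugates of $I_G(\theta)$), and after you pass to $G/R$ the quotient $G/E(G)$ need not be abelian, so normality of the relevant inertia groups is not automatic. The paper avoids this by an induction that reduces to the case where $G'$ is the unique minimal normal subgroup and $G/G'$ is abelian, and handles the remaining non-abelian solvable quotients with Lemmas~\ref{lem:minimal nonabelian}, \ref{lem2}, \ref{lem3} and~\ref{lem4} rather than with Clifford theory on $E(G)$.

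In the second half, the step ``pulling this rigid structure back through the chief series and analysing the minimal extensions that could push $\pv$ strictly above $1/2$'' is an unproved assertion, and the remark that $\Alt_4$ is the smallest such extension is neither precise nor established. What is actually needed is: take $N\unlhd G$ maximal with $G/N$ non-abelian, use Lemma~\ref{lem:minimal nonabelian} to see that $G/N$ is a non-abelian $p$-group (excluded, since then $\pv(G)\ge 3/4$) or a Frobenius group whose complement must have order $2$; then invoke Lemma~\ref{lem4} to force the full preimage $F$ of the Frobenius kernel to be \emph{abelian} (otherwise $\pv(G)\ge 13/18>2/3$); and finally apply Lemma~\ref{lem:abelian} to the pair $(G,F)$ to conclude $\pv(G)=1/2$. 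Your minimal-counterexample framing gives you control only of proper quotients via Theorem~\ref{th: classification 1/2}, and no mechanism for transferring that control to $G$ itself; the lemmas above are exactly that mechanism, and without them (or a substitute) the interval $(1/2,2/3)$ is not excluded.
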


Calculation with GAP \cite{GAP} seems to suggest that $$\{\pv(G): G \text{ is a finite group}\}\cap [0,1067/1260)= \{(m-1)/m: 1\leq m\leq 6\}.$$
It is easy to find a finite group whose proportion of vanishing elements is one of the rational values in the above set. (See Lemmas \ref{lem2} and \ref{lem3}).

The paper is organized as follows. Theorems \ref{th: symmetric groups}  and \ref{th: simple groups} are proved in Section \ref{sec2} and the remaining theorems in Section \ref{sec3}, after some preliminary results are presented in Section \ref{sec:prelim}.

Our notation is standard. We follow \cite{Isaacs} for the character theory of finite groups and \cite{JK} for the representation theory of symmetric groups. 
\section{Preliminaries}\label{sec:prelim}
In this section, we collect and prove some results which will be needed in our proofs of the main theorems.

\begin{lem}\label{lem1}
Let $G=A\times B$ be a direct product of two finite groups $A$ and $B$. Then $\pnv(G)=\pnv(A)\pnv(B)$ and $\pv(G)=\pv(A)+\pv(B)-\pv(A)\pv(B)$. 
\end{lem}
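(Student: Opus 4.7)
The plan is to use the well-known description of the irreducible characters of a direct product. Every irreducible character of $G = A \times B$ has the form $\chi \otimes \psi$ with $\chi \in \Irr(A)$ and $\psi \in \Irr(B)$, and it satisfies $(\chi \otimes \psi)(a,b) = \chi(a)\psi(b)$ for $(a,b) \in A \times B$. This factorization is the only tool one really needs.

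First I would show the set-theoretic identity $\nv(G) = \nv(A) \times \nv(B)$. Indeed, $(a,b) \in \nv(G)$ means $(\chi \otimes \psi)(a,b) = \chi(a)\psi(b) \ne 0$ for every pair $(\chi,\psi) \in \Irr(A) \times \Irr(B)$, which is equivalent to $\chi(a) \ne 0$ for all $\chi \in \Irr(A)$ and $\psi(b) \ne 0$ for all $\psi \in \Irr(B)$; that is, $a \in \nv(A)$ and $b \in \nv(B)$. Here one uses that the trivial character is always available on each factor, so nonvanishing on both coordinates is forced. Taking cardinalities and dividing by $|G| = |A|\,|B|$ gives
\[
\pnv(G) = \frac{|\nv(A)|\,|\nv(B)|}{|A|\,|B|} = \pnv(A)\,\pnv(B).
\]

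From the identity $\pv = 1 - \pnv$, the second formula is then a one-line computation:
\[
\pv(G) = 1 - \pnv(A)\pnv(B) = 1 - (1-\pv(A))(1-\pv(B)) = \pv(A) + \pv(B) - \pv(A)\pv(B).
\]

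There is essentially no obstacle here; the only thing to be a little careful about is the characterization of $\Irr(A \times B)$, but this is a standard result (e.g.\ Theorem 4.21 in \cite{Isaacs}) and is available without restriction on $A$ or $B$.
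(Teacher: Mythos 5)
Your proof is correct and follows essentially the same route as the paper: both use the factorization $\Irr(A\times B)=\{\chi\times\psi\}$ to get $\nv(G)=\nv(A)\times\nv(B)$ and then deduce the two proportion formulas. The paper states this in one line; you have merely spelled out the details.
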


\begin{proof}
Since $\Irr(G)=\{\theta\times \lambda:\theta\in \Irr(A), \lambda\in\Irr(B)\}$, we see that $\nv(G)=\nv(A)\times \nv(B)$ and the results follow.
\end{proof}

For a finite group $G$ and a prime $p$, an element $x\in G$ is said to be $p$-singular if $p$ divides the order of $g$. An irreducible character $\chi $ of $G$ is said to have $p$-defect zero if $p$ does not divide $|G|/\chi(1)$. By a result of R. Brauer (\cite[Theorem 8.17]{Isaacs}), if $\chi\in\Irr(G)$ has $p$-defect zero, then $\chi(g)=0$ for every $p$-singular element $g\in G$.
The next lemma is well-known, for the reader's convenience, we include its proof here.

\begin{lem}\label{lem:defect 0}
Let $G$ be a finite group and let $N\unlhd G$. Let $p$ be a prime dividing $|N|$ and let $\theta\in
\Irr(N)$ be  of $p$-defect zero. If $\chi\in\Irr(G)$ lies above $\theta$, then $\chi$ vanishes on all $p$-singular elements  of $N.$  In particular, if $N$ is non-solvable, then $N\cap \van(G)\neq\emptyset.$
\end{lem}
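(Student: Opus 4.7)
The plan is to use Clifford's theorem together with the Brauer result on $p$-defect zero characters cited just above the lemma, and (for the second assertion) to invoke the Granville--Ono theorem that every finite non-abelian simple group, hence every finite non-solvable group, admits an irreducible character of $p$-defect zero for some prime $p$ (in fact for $p=2$ or $p=3$).

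For the first assertion, let $\chi\in\Irr(G)$ lie above $\theta$. By Clifford's theorem the restriction decomposes as $\chi_N=e\sum_{i=1}^t \theta^{g_i}$, where $\theta^{g_1},\dots,\theta^{g_t}$ are the distinct $G$-conjugates of $\theta$ and $e\ge 1$. Since the $p$-defect of an irreducible character of $N$ depends only on $|N|$ and the character degree, and conjugation by $G$ preserves both of these quantities, every $\theta^{g_i}$ is still of $p$-defect zero. Brauer's theorem then gives $\theta^{g_i}(x)=0$ for every $p$-singular $x\in N$, so summing yields $\chi(x)=\chi_N(x)=0$ on $p$-singular elements of $N$, as desired.

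For the "in particular" clause, assume $N$ is non-solvable. By the Granville--Ono theorem, $N$ has some $\theta\in\Irr(N)$ of $p$-defect zero for some prime $p$ dividing $|N|$. Choose any $\chi\in\Irr(G)$ lying above $\theta$ (such $\chi$ exists as a constituent of $\theta^G$). The first part of the lemma shows $\chi$ vanishes on every $p$-singular element of $N$; since $p\mid |N|$, Cauchy's theorem produces a nontrivial $p$-element $x\in N$, and this $x$ witnesses $N\cap\van(G)\neq\emptyset$.

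The main (and really the only) non-routine ingredient is the existence of a defect-zero character in a non-solvable group, which is the content of the Granville--Ono theorem; everything else is a direct application of Clifford's theorem and the standard Brauer vanishing result. I would simply cite Granville--Ono rather than reproving it.
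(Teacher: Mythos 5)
Your proof of the first assertion is correct and is essentially identical to the paper's: Clifford's theorem, the observation that $G$-conjugation preserves $p$-defect zero, and Brauer's vanishing theorem. No issues there.

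The ``in particular'' clause, however, has a genuine gap. You deduce from Granville--Ono that ``every finite non-abelian simple group, hence every finite non-solvable group, admits an irreducible character of $p$-defect zero for some prime $p$.'' That ``hence'' is false: defect-zero characters do not pass from simple groups to arbitrary non-solvable groups. For example, $N=\Alt_5\times C_{30}$ is non-solvable of order $2^3\cdot 3^2\cdot 5^2$, but its irreducible character degrees are $1,3,4,5$, none of which is divisible by the full $p$-part of $|N|$ for any prime $p$; so $N$ has no $p$-defect zero character for any $p$, and your argument cannot even get started. The paper avoids this by not working with $N$ itself: it chooses normal subgroups $L\le M\le N$ of $G$ with $M/L$ a non-abelian chief factor of $G$, reduces by induction to $L=1$, so that $M\cong S^n$ is a direct product of copies of a non-abelian simple group $S$. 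Picking a prime $p\ge 5$ dividing $|S|$, Granville--Ono gives $\lambda\in\Irr(S)$ of $p$-defect zero, and then $\phi=\lambda\times\cdots\times\lambda\in\Irr(M)$ is of $p$-defect zero; applying the first assertion of the lemma to the normal subgroup $M\unlhd G$ (not to $N$) shows that any $\chi\in\Irr(G)$ over $\phi$ vanishes on the $p$-singular elements of $M\subseteq N$, which exist by Cauchy. So the missing idea is the reduction to a chief factor isomorphic to $S^n$; you should replace your appeal to a defect-zero character of $N$ with this reduction.
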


\begin{proof} Note that if $\theta\in\Irr(N)$ has $p$-defect zero, then every $G$-conjugate of $\theta$ also has $p$-defect zero. Now let $\chi\in\Irr(G)$ be lying above $\theta$, where $\theta\in\Irr(N)$ has $p$-defect zero. By \cite[Theorem 6.2]{Isaacs}, $\chi_N=e\sum_{i=1}^t\theta_i$, where each $\theta_i$ is a $G$-conjugate of $\theta=\theta_1$. It follows that $\chi(x)=e\sum_{i=1}^t\theta_i(x)=0$ for every $p$-singular element $x\in N.$

Now assume that $N\unlhd G$ is non-solvable. There exist normal subgroups $L\leq M\leq N$ of $G$ such that $M/L$ is a non-abelian chief factor of $G$. We have that $M/L$ is a non-abelian minimal normal subgroup of $G/L$.  If $M/L\cap \van(G/L)\neq \emptyset,$ then $M\cap \van(G)\neq\emptyset$ and we are done. So, working by induction we may assume that $L=1$.
We can write $M\cong S^n,$ where $S$ is a non-abelian simple group and $n\ge 1$ is an integer. Let $p\ge 5$ be a prime divisor of $|S|$. From \cite[Corollary 2]{GO}, $S$ has a character $\lambda\in\Irr(S)$ of $p$-defect zero. Let $\phi=\lambda^n=\lambda\times\cdots\times\lambda\in\Irr(M)$. Then $\phi$ has $p$-defect zero. Now let $\chi\in\Irr(G)$ be an irreducible constituent of  $\phi^G$. From the previous claim, we have $\chi(x)=0$ for every $p$-singular element $x\in M.$
\end{proof}

 The following is key to our proofs.

\begin{lem}\label{lem:quotient}
Let $G$ be a finite group and let $N$ be a normal subgroup of $G$. Then $$\pv(G)\ge \pv(G/N)+|N\cap\van(G)|/|G|\ge \pv(G/N).$$ Moreover, if $N$ is non-solvable, then $\pv(G)>\pv(G/N)$.
\end{lem}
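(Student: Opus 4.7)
The plan is to exhibit two disjoint subsets of $\van(G)$ whose total contribution yields the stated lower bound. The first subset comes from inflation: if $\chi \in \Irr(G/N)$ and $\chi(gN) = 0$ for some coset $gN$, then the inflation $\tilde\chi \in \Irr(G)$ satisfies $\tilde\chi(x) = \chi(xN) = 0$ for every $x \in gN$. Hence the full preimage in $G$ of $\van(G/N) \subseteq G/N$ is contained in $\van(G)$, and this preimage has cardinality $|\van(G/N)| \cdot |N|$.

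The second subset is $N \cap \van(G)$. The key observation that makes the two contributions combine cleanly is that the identity coset is never a vanishing element of $G/N$ (since $\chi(1) \neq 0$ for every irreducible character). Thus the preimage of $\van(G/N)$ lies entirely in $G \setminus N$, so it is disjoint from $N \cap \van(G)$. Adding the two sizes gives
$$|\van(G)| \;\ge\; |\van(G/N)|\cdot |N| + |N \cap \van(G)|,$$
and dividing by $|G|$ yields $\pv(G) \ge \pv(G/N) + |N\cap\van(G)|/|G|$, since $|\van(G/N)|\cdot|N|/|G| = |\van(G/N)|/|G/N| = \pv(G/N)$. The second inequality in the statement is then immediate.

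For the strict inequality when $N$ is non-solvable, I would invoke the final assertion of Lemma \ref{lem:defect 0}, which guarantees $N \cap \van(G) \neq \emptyset$, so the extra term $|N \cap \van(G)|/|G|$ is strictly positive. There is no real obstacle in this argument; the only subtle point is the disjointness, which hinges on the elementary fact that a quotient character cannot vanish on the identity coset, so inflated vanishing cosets contribute only outside $N$ and can be combined with the genuinely new vanishing elements inside $N$ without double-counting.
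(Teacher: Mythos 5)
Your argument is correct and follows essentially the same route as the paper: inflate vanishing characters of $G/N$ to see that the full preimage of $\van(G/N)$ lies in $\van(G)$, note that this preimage avoids $N$ because the identity coset cannot be a vanishing element, combine it disjointly with $N\cap\van(G)$, and invoke the last assertion of Lemma \ref{lem:defect 0} for the strict inequality when $N$ is non-solvable. No gaps.
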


\begin{proof}
Observe that if $\chi$ is an irreducible character of $G/N$, then $\chi$ can be considered as an irreducible character of $G$ with $N\subseteq \kernel(\chi)$. Moreover, if $gN\in G/N$ such that $\chi(gN)=0$, then $\chi(gn)=\chi(gN)=0$ for all $n\in N$ and thus $gN\subseteq \van(G)$.  It  follows that $$\mathcal{C}:=\bigcup_{gN\in\van(G/N)}gN\subseteq \van(G).$$ Note that if $gN\in\van(G/N)$, then $g\in G-N$. In particular, $\mathcal{C}$ and $N\cap\van(G)$ are disjoint subsets of $\van(G)$. Hence   $|\van(G)|\ge |N||\van(G/N)|+|N\cap\van(G)|$ and the first part of the lemma follows by dividing both sides by $|G|$. Now, if $N$ is non-solvable, then $N\cap \van(G)\neq \emptyset$ by Lemma \ref{lem:defect 0}; therefore, $\pv(G)>\pv(G/N)$.
\end{proof}

We will use the previous lemma together with the following result which describes the structure of  minimal non-abelian solvable groups. 
\begin{lem}\label{lem:minimal nonabelian}
Let $G$ be a finite solvable group and assume that $G'$ is the unique minimal normal subgroup of $G$. Then  all non-linear irreducible characters of $G$ have the same degree $f$ and one of the following cases holds.

\begin{enumerate}[$(a)$]
\item $G$ is a $p$-group for some prime $p$, $\Center(G)$ is cyclic, $G/\Center(G)$ is elementary abelian of order $f^2$.
\item $G$ is a Frobenius group with an abelian Frobenius complement of order $f$ and kernel $G'$ which is an elementary abelian $p$-group.
\end{enumerate}
Moreover, if $\chi$ is an irreducible character of $G$ of degree $f$, then $\chi$ vanishes on $G-\Center(G) $ in case $(a)$ and on $G-G'$ in case $(b)$.
\end{lem}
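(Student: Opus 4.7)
The plan is to reduce the problem to a classification of $G$ based on whether $N := G'$ is contained in $\Center(G)$ or not, and then to verify the character claims in each case. Since $G$ is solvable and $N$ is a minimal normal subgroup of $G$, $N$ is elementary abelian of exponent $p$ for some prime $p$. A preliminary observation is that every non-linear $\chi \in \Irr(G)$ is faithful: $\Ker \chi \lhd G$, and if non-trivial it contains the unique minimal normal $N$, forcing $\chi$ to factor through the abelian group $G/N$ and hence to be linear, a contradiction. Since $\Center(G) \lhd G$, uniqueness of $N$ yields the dichotomy $\Center(G) = 1$ or $\Center(G) \supseteq N$, producing cases~(b) and~(a) respectively.

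In case $\Center(G) \supseteq N$, the inclusion $G' \leq \Center(G)$ forces $G$ to have nilpotency class at most $2$; each non-trivial Sylow subgroup of the nilpotent $G$ is then normal in $G$ and contains a minimal normal subgroup, which must be $N$, so only the $p$-Sylow is non-trivial and $G$ is a $p$-group. Then $|N|=p$, and $\Center(G)$ is cyclic (otherwise it would contain two distinct subgroups of order $p$, each normal in $G$, violating uniqueness). The commutator identity $[x^p, y] = [x, y]^p = 1$, valid since $[x,y] \in N$ has order dividing $p$ and $N \subseteq \Center(G)$, shows $x^p \in \Center(G)$ for all $x$, so $G/\Center(G)$ is elementary abelian. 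For a non-linear (hence faithful) $\chi$, one has $\Center(\chi) = \Center(G)$, and the claim is that $\chi$ vanishes on $G - \Center(G)$: for $g \notin \Center(G)$, choose $y \in G$ with $c := [g, y] \neq 1$, note $c$ generates $N$ and the central character $\omega_\chi$ is injective on $N$ by faithfulness, and observe that $\chi(g) = \chi(y^{-1}gy) = \chi(gc) = \omega_\chi(c)\chi(g)$ forces $\chi(g) = 0$. Hence $|G| = \sum_g |\chi(g)|^2 = |\Center(G)|\chi(1)^2$, giving $\chi(1)^2 = [G:\Center(G)]$ for every such $\chi$.

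In case $\Center(G) = 1$, the goal is to show that $G$ is Frobenius with kernel $N$ and abelian complement; the main step is proving $F := \Fit(G) = N$. Each Sylow subgroup of $F$ is characteristic in $F$ hence normal in $G$, so by uniqueness only the $p$-Sylow is non-trivial and $F$ is a $p$-group with $N \leq \Center(F)$. The centralizer $\Centralizer_G(N)$ is normal and satisfies $[\Centralizer_G(N), \Centralizer_G(N)] \leq G' = N \leq \Center(\Centralizer_G(N))$, making it nilpotent and hence contained in $F$; combined with $F \leq \Centralizer_G(N)$ this gives $F = \Centralizer_G(N)$. Since $N$ is an irreducible $\bbF_p[G/F]$-module and $G/F$ is abelian, Schur's lemma embeds $G/F$ in $\bbF_{p^m}^\times$ where $|N|=p^m$; in particular $|H| := |G/F|$ is coprime to $p$. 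Maschke's decomposition $\Center(F) = \Center(F)^H \oplus [\Center(F), H]$, together with $\Center(F)^H \leq \Center(G) = 1$ and $[\Center(F), H] \leq [G, G] = N$, yields $\Center(F) = N$. If $F$ were strictly larger than $N$, then $F$ would have class $2$ with $[F, F] = \Center(F) = N$, and $[F, H] \leq N$ would force $H$ to act trivially on $F/N$, embedding $H$ (faithfully, since $\Centralizer_G(F) \leq F$ in the solvable case) into the central automorphism group $\Hom(F/N, N)$ of $F$, which is a $p$-group --- contradicting $|H|$ coprime to $p$. Hence $F = N$.

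With $F = N$, the abelian group $G/N$ acts faithfully and irreducibly on $N \cong \bbF_{p^m}$ as a subgroup of $\bbF_{p^m}^\times$, hence fixed-point-freely on $N - \{0\}$; Schur--Zassenhaus produces a complement, so $G = N \rtimes H$ is Frobenius with kernel $N$, abelian complement $H$, and $|H| = [G:N] = f$. The non-linear irreducibles of $G$ are then the induced characters $\theta^G$ for $\theta \in \Irr(N) - \{1\}$ (one per $H$-orbit), each irreducible of degree $f = |H|$ and vanishing on $G - N$ by the induced character formula, as any $G$-conjugate of $g \notin N$ remains outside $N$. The main obstacle is establishing $F = N$ in case~(b), in particular the Maschke and central-automorphism steps; the remainder of the argument consists of fairly direct manipulations once the dichotomy on $\Center(G)$ is set up.
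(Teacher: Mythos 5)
The paper does not actually prove this lemma: it simply cites \cite[Lemma 12.3]{Isaacs} ``and its proof,'' so your write-up is a from-scratch reconstruction of that result. Most of it is correct: the faithfulness of non-linear characters, the dichotomy on $\Center(G)$, all of case (a) including the vanishing and the degree computation, the identification $F=\Centralizer_G(G')=\Fit(G)$ as a $p$-group with $\Center(F)=N$ via the coprime (Maschke/Fitting) decomposition, and everything after $F=N$ is established.

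The one genuine flaw is the step ruling out $F>N$. You assert that, since $H$ acts trivially on $F/N$, it embeds into the ``central automorphism group $\Hom(F/N,N)$.'' That identification is only valid for automorphisms trivial on $F/N$ \emph{and} on $N$; your $H$ acts faithfully --- indeed fixed-point-freely --- on $N$, so the natural map $h\mapsto\bigl(fN\mapsto[f,h]\bigr)$ is only an injective \emph{crossed} homomorphism into $\Hom(F/N,N)$, not a group embedding, and no contradiction with $p\nmid|H|$ follows from it. (Already for $F=\bbZ/p\times\bbZ/p$ with $N$ one factor, the automorphisms trivial on $F/N$ do not form a $p$-group.) The step is repairable in at least two ways. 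First, since $F$ is non-abelian one has $N=[F,F]\le\Phi(F)$, so $H$ acts trivially on $F/\Phi(F)$ and Burnside's basis theorem forces the coprime group $H$ to act trivially on $F$, whence $H\le\Centralizer_G(F)\le F$ and $H=1$, contradicting $\Center(G)=1$. Alternatively, coprime action gives $F=[F,H]\,\Centralizer_F(H)=N\,\Centralizer_F(H)$, and since $N\le\Center(F)$ this yields $N=[F,F]=[\Centralizer_F(H),\Centralizer_F(H)]\le\Centralizer_F(H)$, hence $F=\Centralizer_F(H)$ and the same contradiction. With either repair your argument is complete.
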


\begin{proof} This follows from \cite[Lemma 12.3]{Isaacs} and its proof.
\end{proof}

The non-vanishing elements in finite solvable groups were studied in \cite{INW}. Using their results, we can compute the proportions of vanishing elements in finite $p$-groups and some Frobenius groups. Note that if $G$ is a finite group, then $\Center(G)\subseteq\nv(G)$. The converse is also true for finite $p$-groups.

\begin{lem}\label{lem2}
Let $G$ be a finite non-abelian $p$-group. Then $\van(G)=G-\Center(G)$ and $$\pv(G)=1-\frac{1}{|G:\Center(G)|}\ge 1-\frac{1}{p^2}.$$
\end{lem}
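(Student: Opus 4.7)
The plan is to establish the set-theoretic identity $\van(G)=G-\Center(G)$; the proportion formula and the bound $1-1/p^2$ then follow from the classical fact that $G/\Center(G)$ cannot be cyclic when $G$ is non-abelian.

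The inclusion $\Center(G)\subseteq\nv(G)$ is immediate from Schur's lemma: for $z\in\Center(G)$ and $\chi\in\Irr(G)$, one has $\chi(z)=\omega_\chi(z)\chi(1)$ with $\omega_\chi(z)$ a root of unity, so $|\chi(z)|=\chi(1)\neq 0$. For the reverse inclusion I would induct on $|G|$, taking a minimal counterexample: a non-abelian $p$-group $G$ with some non-central, non-vanishing $g$. For every non-trivial normal $N\trianglelefteq G$, the inductive hypothesis applied to $G/N$ forces $gN\in\Center(G/N)$, equivalently $[g,G]\le N$, since otherwise a vanishing character of $G/N$ would lift to a character of $G$ vanishing on $g$. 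If $G$ had two distinct minimal normal subgroups $M_1,M_2$, this would give $[g,G]\le M_1\cap M_2=1$, contradicting non-centrality of $g$. Hence $G$ has a unique minimal normal subgroup $M$; standard facts about $p$-groups place $M$ inside $\Center(G)$ with $|M|=p$, and then $[g,G]=M$.

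The endgame is a one-line character calculation. Since $[g,G]=M$, the conjugacy class of $g$ equals $gM$, so $\chi(gm)=\chi(g)$ for every $\chi\in\Irr(G)$ and every $m\in M$; but $m\in\Center(G)$ also gives $\chi(gm)=\omega_\chi(m)\chi(g)$ by Schur. Hence $(\omega_\chi(m)-1)\chi(g)=0$ for every $m\in M$. Choose $\chi\in\Irr(G)$ with $M\not\subseteq\Ker(\chi)$, which exists because $\bigcap_{\chi}\Ker(\chi)=1\neq M$; then $\omega_\chi|_M$ is a non-trivial character of a cyclic group of order $p$, so some $m\in M$ makes $\omega_\chi(m)\neq 1$ and forces $\chi(g)=0$. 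This contradicts the non-vanishing of $g$, completing the induction. Finally, $\pv(G)=1-|\Center(G)|/|G|=1-1/|G:\Center(G)|$; non-abelianness of $G$ forces $G/\Center(G)$ to be non-cyclic, hence $|G:\Center(G)|\ge p^2$ and $\pv(G)\ge 1-p^{-2}$. The main work is the inductive production of the central minimal normal subgroup $M$ with $[g,G]=M$; once that is in hand, the combined identity $\chi(g)=\chi(gm)=\omega_\chi(m)\chi(g)$ closes the argument almost for free.
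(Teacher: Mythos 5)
Your proof is correct, but it takes a more self-contained route than the paper does. The paper disposes of the key containment $\nv(G)\subseteq\Center(G)$ in one line by citing Theorem~B of Isaacs--Navarro--Wolf \cite{INW}, and then does exactly the same arithmetic you do (including the observation that $G/\Center(G)$ non-cyclic forces $|G:\Center(G)|\ge p^2$). You instead reprove the INW result for $p$-groups from scratch: a minimal counterexample $g$ forces $[g,G]$ into every non-trivial normal subgroup, whence $G$ has a unique minimal normal subgroup $M$ with $[g,G]=M\le\Center(G)$ and $|M|=p$, and the identity $\chi(g)=\chi(gm)=\omega_\chi(m)\chi(g)$ for a character $\chi$ not containing $M$ in its kernel kills $\chi(g)$. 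This is a clean and complete argument; the only step you should make explicit is why the class of $g$ is all of $gM$ rather than a proper subset --- either note that $x\mapsto[g,x]$ is a homomorphism onto a non-trivial subgroup of the order-$p$ group $M$ (using that $M$ is central), or note that the class size is a $p$-power strictly between $1$ and $p+1$. What your approach buys is independence from \cite{INW}; what the paper's citation buys is brevity and the fact that the INW theorem covers more general nilpotent (indeed supersolvable) groups, which is not needed here.
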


\begin{proof} By \cite[Theorem B]{INW}, every non-vanishing element of $G$ lies in $\Center(G)$ and thus $\nv(G)=\Center(G)$ which implies that $\van(G)=G-\Center(G)$. It follows that \[\pv(G)=\frac{|\van(G)|}{|G|}=\frac{|G-\Center(G)|}{|G|}=\frac{|G|-|\Center(G)|}{|G|}=1-\frac{1}{|G:\Center(G)|}.\]
Since $G$ is non-abelian, $G/\Center(G)$ is non-cyclic, hence $|G:\Center(G)|\ge p^2$ which forces $\pv(G)=1-1/|G:\Center(G)|\ge 1-1/p^2$ as wanted.
\end{proof}

\begin{lem}\label{lem3}  Let $G$ be  a Frobenius group with kernel $F$  and let $n=|G:F|\ge 2$. Then  $G-F\subseteq \van(G)$ and $\pv(G)\geq (n-1)/n.$ Moreover, if $n=2$ or $F$ is an abelian $p$-group for some prime $p$, then $\van(G)=G-F$ and $\pv(G)= 1-1/n$.
\end{lem}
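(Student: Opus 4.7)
My plan is to establish two inclusions: $G - F \subseteq \van(G)$ always, and $F \cap \van(G) = \emptyset$ under the additional hypotheses; once these are in hand, the stated bounds and equalities follow immediately.

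For the first inclusion, I would produce, from any non-trivial $\theta \in \Irr(F)$ (which exists because $F \neq 1$), an irreducible character of $G$ that vanishes on all of $G - F$. The Frobenius complement $H$ acts fixed-point-freely on $F \setminus \{1\}$, and by Brauer's permutation lemma it also acts fixed-point-freely on $\Irr(F) \setminus \{1_F\}$; hence the stabilizer of $\theta$ in $G$ is $F$, and Clifford theory yields $\theta^G \in \Irr(G)$. Being induced from $F$, this character vanishes off $F$, so every $g \in G - F$ is a zero of $\theta^G$, giving $\pv(G) \geq (|G| - |F|)/|G| = (n-1)/n$.

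For the refined equality, when $n = 2$ or $F$ is an abelian $p$-group, I would show that each non-identity $x \in F$ is non-vanishing. Characters inflated from $G/F$ take the value $\chi(1) \neq 0$ on $F$, so only the induced characters $\theta^G$ (for $\theta \in \Irr(F) \setminus \{1_F\}$) require attention. Taking $H$ as a transversal, the induction formula reads
\[
\theta^G(x) = \sum_{h \in H} \theta(x^h),
\]
a sum of $n$ roots of unity. When $n = 2$, the classical fact that any fixed-point-free involution of a finite group acts as inversion forces $F$ to be abelian, and coprimality of $|F|$ and $|H|$ in a Frobenius group makes $|F|$ odd; then $\theta^G(x) = \theta(x) + \overline{\theta(x)} = 2\,\Re\,\theta(x)$ is non-zero because $\theta(x)$ is a root of unity of odd order and so never equals $\pm i$. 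When $F$ is an abelian $p$-group, each $\theta(x^h)$ is a $p$-power root of unity, so lies in $\bbZ[\zeta]$ for a suitable primitive $p^k$-th root of unity $\zeta$, and reduces to $1$ modulo the prime ideal $(1-\zeta)$ lying above $p$; hence $\theta^G(x) \equiv n \pmod{(1-\zeta)}$, and since $\gcd(n, p) = 1$ and the residue ring has characteristic $p$, this residue is non-zero, forcing $\theta^G(x) \neq 0$.

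The most delicate step is the cyclotomic congruence argument in the $p$-group case; the remainder is a routine combination of Clifford theory and classical structural facts on Frobenius groups.
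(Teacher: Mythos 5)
Your proposal is correct. The first inclusion $G-F\subseteq\van(G)$ and the case $n=2$ follow essentially the same route as the paper: induce a non-principal $\theta\in\Irr(F)$ to get an irreducible character vanishing off $F$, and for $n=2$ use that the complement inverts the (necessarily abelian, odd-order) kernel so that $\theta^G(x)=2\,\mathrm{Re}\,\theta(x)\neq 0$ because $\theta(x)$ is a root of unity of odd order. Where you genuinely diverge is the abelian $p$-group case: the paper simply invokes \cite[Theorem A]{INW} (elements of the center of a normal Sylow $p$-subgroup are non-vanishing, applied to $F=\Center(F)$), whereas you give a self-contained cyclotomic congruence argument, reducing $\theta^G(x)=\sum_{h\in H}\theta(x^h)\equiv n\pmod{(1-\zeta)}$ and using $\gcd(n,p)=1$. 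Your argument is sound (each $\theta(x^h)$ is a $p$-power root of unity since $F$ is abelian, and $(1-\zeta)$ is the unique prime of $\bbZ[\zeta]$ above $p$ with residue field $\bbF_p$), and it has the merit of not depending on \cite{INW}; the paper's citation is shorter and covers the statement with no computation. One small point worth making explicit in your write-up: the claim that only inflated and induced characters occur relies on the inertia group of every non-principal $\theta\in\Irr(F)$ being exactly $F$, which in turn uses that for $1\neq f\in F$ one has $\Centralizer_G(f)\leq F$, so that no non-identity conjugacy class of $F$ is fixed by an element of $G-F$ before Brauer's permutation lemma is applied; this is standard (it is the same fact underlying \cite[Theorem 6.34]{Isaacs}) but is the one step you pass over silently.
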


\begin{proof}
 Let $\lambda$ be a non-principal irreducible character of $F$. Then $\chi=\lambda^G$ is an irreducible character of $G$. Thus $\chi$ vanishes on $G-F$. Therefore $G-F\subseteq \van(G)$, which implies that $\pv(G)\ge (n-1)/n$ and that $\nv(G)\subseteq F$. 

Assume that $F$ is an abelian $p$-group. Then  $F$ is a normal Sylow $p$-subgroup of $G$. By \cite[Theorem A]{INW}, all elements in $F=\Center(F)$ are non-vanishing elements in $G$. Hence $G-F=\van(G)$ and $\pv(G)=|G-F|/|G|=1-1/|G:F|=1-1/n$. 

Assume next that $n=2$. Then $G=\langle h\rangle F$, where $\langle h\rangle$ is a Frobenius complement of $G$ of order $2$.  In this case, $F$ is abelian by \cite[Lemma 7.21]{Isaacs}. Hence $h$ inverts each element of $F$. Now let $x\in F$ and let $\chi$ be any non-linear irreducible character of $G$. Then $\chi=\lambda^G$ for some linear character $\lambda$ of $F$. Therefore, since $o(x)$ is odd, we have $$\chi(x)=\lambda(x)+\lambda^h(x)=\lambda(x)+\lambda(hxh^{-1})=\lambda(x)+\lambda(x^{-1})=2\textrm{Re}(\lambda(x))\neq 0.$$ Thus $\van(G)=G-F$ as $\nv(G)=F.$ The lemma now follows.
\end{proof}

%

\begin{lem}\label{lem:abelian} Let $G$ be a finite non-abelian group and let $N\unlhd G$. Suppose that $G/N$  is a Frobenius group with Frobenius kernel $F/N$. Assume further that $|G:F|=2$ and $F$ is abelian. Let $U\in\Syl_2(F)$. If $U$  is central in $G$ or if $\pv(G)<3/4$, then $\pv(G)=1/2$.
\end{lem}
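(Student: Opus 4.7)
The plan is to describe $\Irr(G)$ explicitly and count $|\van(G)|$ directly. Fix $g \in G\setminus F$. Because $F$ is abelian of index $2$ in $G$, every non-linear $\chi \in \Irr(G)$ has the form $\lambda^G$ for some $\lambda \in \Irr(F)$ with $\lambda^g \neq \lambda$, has degree $2$, and vanishes identically on $G\setminus F$. Thus $G\setminus F \subseteq \van(G)$, while $x \in F$ lies in $\van(G)$ precisely when there exists $\lambda \in \Irr(F)$ with $\lambda^g \neq \lambda$ and $\lambda(x) + \lambda(x^g) = 0$. Defining $c\colon F \to F$ by $c(f) = f^g f^{-1}$ (a homomorphism, since $F$ is abelian) and dividing by $\lambda(x)$, this rewrites as $\lambda(c(x)) = -1$.

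Next, I would decompose $F = U \times V$ where $V$ is the Hall $2'$-subgroup; both factors are characteristic in $F$, hence normal in $G$. Because $G/N$ is a Frobenius group with complement $G/F$ of order $2$, the non-trivial coset inverts the kernel $F/N$ fixed-point-freely, which forces $F/N$ to be abelian of odd order; in particular $U \le N$. Every $\lambda \in \Irr(F)$ factors as $\lambda = \mu\nu$ with $\mu \in \Irr(U)$, $\nu \in \Irr(V)$, and for $x = uv$ the condition $\lambda(c(x)) = -1$ becomes $\mu(c(u))\nu(c(v)) = -1$. Here $\mu(c(u))$ is a $2$-power root of unity and $\nu(c(v))$ is an odd-order root of unity, so their product being $-1$ forces $\mu(c(u)) = -1$ and $\nu(c(v)) = 1$.

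Now suppose $U \le \Center(G)$. Then $c(u) = 1$ for every $u \in U$, hence no $\mu$ can satisfy $\mu(c(u)) = -1$, and $\van(G) = G\setminus F$, giving $\pv(G) = 1/2$. Suppose instead only that $\pv(G) < 3/4$. Because $F$ (abelian) centralizes $U$ and $G = F\langle g\rangle$, the condition $U \le \Center(G)$ is equivalent to $c(U) = 1$. Assume for a contradiction that $c(U) \neq 1$. The non-trivial abelian $2$-group $c(U)$ admits a character of order exactly $2$, which extends to some $\mu \in \Irr(U)$. With $\nu = 1_V$ put $\lambda = \mu\nu$; since $\mu$ is non-trivial on $c(U)$, one has $\mu^g \neq \mu$ (equivalently $\lambda^g \neq \lambda$), and the character $u \mapsto \mu(c(u))$ of $U$ has image $\{\pm 1\}$, so its fibre over $-1$ has size $|U|/2$. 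Hence $\van(G) \cap F \supseteq \{uv : \mu(c(u)) = -1,\ v \in V\}$, which has size $(|U|/2)\cdot|V| = |F|/2$, yielding $\pv(G) \ge 1/2 + (|F|/2)/|G| = 3/4$, a contradiction. So $c(U) = 1$, reducing to the first case.

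The only step that really needs care is the cyclotomic separation splitting $\mu(c(u))\nu(c(v)) = -1$ into its $2$-part and $2'$-part (since a $2$-power root of unity times an odd-order root of unity can equal $-1$ only in the trivial way); after that, exhibiting a witness character $\mu$ of order $2$ on $c(U)$ when $c(U) \neq 1$ is routine, and the size computation $|U|/2 \cdot |V| = |F|/2$ is immediate.
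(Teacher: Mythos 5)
Your proof is correct, and it reaches the conclusion by a somewhat different route than the paper. For the case $U\le\Center(G)$ the two arguments are close in substance: the paper invokes It\^{o}'s theorem and Clifford theory to write $\chi(tu)=\lambda(u)\chi(t)$ and then observes that a sum of two odd-order roots of unity is nonzero, while you encode the same computation in the homomorphism $c(f)=f^gf^{-1}$ and the unique factorization of a root of unity into its $2$-part and odd part; the content is the same, and your reduction of ``$x$ vanishes'' to ``$\lambda(c(x))=-1$'' is a clean reformulation (note that $\lambda(c(x))=-1$ automatically forces $\lambda^g\ne\lambda$, so irreducibility of $\lambda^G$ is not an extra hypothesis). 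Where you genuinely diverge is the case $\pv(G)<3/4$: the paper passes to the quotient $G/T$ by the $2$-complement, observes it is a $2$-group, and applies Lemma~\ref{lem2} (which rests on the Isaacs--Navarro--Wolf theorem that non-vanishing elements of a $p$-group are central) to force $G/T$ abelian and hence $[G,U]=1$; you instead exhibit, when $c(U)\ne 1$, an explicit character $\lambda=\mu\cdot 1_V$ whose induction vanishes on a subset of $F$ of size exactly $|F|/2$, giving $\pv(G)\ge 3/4$ directly. Your version is more self-contained (no appeal to \cite{INW}) at the cost of a slightly longer character-theoretic computation; the observation that $U\le N$ is correct but is not actually used afterwards and could be dropped.
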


\begin{proof}  Let $T$ be a $2$-complement of $F$. Then $F=T\times U.$

(i) Assume that $U\leq \Center(G)$.   Since $G/N$ is a Frobenius group with a Frobenius complement of order $2$, $\van(G/N)=G/N-F/N$ by Lemma \ref{lem3} and so $G-F\subseteq\van(G)$. To show that $\pv(G)=1/2$, we only need to show that every element of $F$ is a non-vanishing element of $G$. Fix $g\in F.$ Then $g=tu$, where $t\in T$ and $u\in U$. Hence $m:=o(t)$ is odd and $u\in U\subseteq \Center(G)$. 

Since $F\unlhd G$ is abelian of index $2$, by  It\^{o}'s theorem \cite[Theorem 6.15]{Isaacs}, every non-linear irreducible character of $G$ has degree $2$. Let $\chi\in\Irr(G)$. If $\chi(1)=1$, then $\chi(g)\neq 0$. Assume that $\chi(1)=2$. Then $\chi_F=\psi_1+\psi_2$, where $\psi_1,\psi_2\in\Irr(F)$ are $G$-conjugate linear characters of $F$ by Clifford's theorem. We also have $\chi_U=\chi(1)\lambda$ for some $\lambda\in\Irr(U)$ since $U\leq \Center(G)$. So $\chi(g)=\lambda(u)\chi(t)$. Clearly $\lambda(u)\neq 0$. Hence we need to show that $\chi(t)\neq 0.$ Now $t\in F$ and thus $\chi(t)=\psi_1(t)+\psi_2(t)$, where $\psi_i(t)'s$ are $m$th-roots of unity. However, as $m$ is odd, the sum of two $m$th-roots of unity cannot be zero. Thus $g\in F$ is a non-vanishing element of $G$. Therefore $G-F=\van(G)$ and hence $\pv(G)=1/2.$

(ii) Assume that $\pv(G)<3/4.$ Since $F/N$ is of odd order, we know that $U\leq N.$ We will show that $U\leq \Center(G)$ and hence $\pv(G)=1/2$ by part (i).   Since $T=\OO_{2'}(F)$ and $U=\OO_2(F)$,  both $T$ and $U$ are normal in $G$. Furthermore, as $|G:F|=2$, $G/T $ is a $2$-group. If $G/T$ is non-abelian, then $\pv(G/T)\ge 3/4$ by Lemma \ref{lem2}, hence $\pv(G)\ge 3/4$, contradicting our assumption. Hence $G/T$ is abelian. It follows that $[G,U]\leq T\cap U=1$ and hence $U$ is central in $G$.
\end{proof}

\section{Proportions of vanishing elements in symmetric and simple groups}\label{sec2}

The following results compares vanishing elements of symmetric and alternating groups.
\begin{lem}\label{lem: Alternating groups} Let $n\ge 3$ be an integer. Then \[\van(\Sym_n)=(\Sym_n-\Alt_n)\cup \van(\Alt_n)\text{ and } \pv(\Alt_n)=2\pv(\Sym_n)-1.\]
\end{lem}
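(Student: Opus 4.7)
The plan is to prove the set equality $\van(\Sym_n) = (\Sym_n - \Alt_n) \sqcup \van(\Alt_n)$ (a disjoint union since $\van(\Alt_n) \subseteq \Alt_n$), from which the proportion identity follows immediately by counting: dividing $|\van(\Sym_n)| = |\Alt_n| + |\van(\Alt_n)|$ by $|\Sym_n| = 2|\Alt_n|$ yields $\pv(\Sym_n) = \tfrac12 + \tfrac12\pv(\Alt_n)$, i.e.\ $\pv(\Alt_n) = 2\pv(\Sym_n) - 1$. Three inclusions then remain to be verified.

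For $\Sym_n - \Alt_n \subseteq \van(\Sym_n)$, I would use the fact that for every $n \geq 3$ there is a self-conjugate partition $\lambda = \lambda' \vdash n$ (for instance $(2,1)$, $(2,2)$, $(3,1,1)$; more generally self-conjugate partitions of $n$ are in bijection with partitions of $n$ into distinct odd parts, so they exist for all $n \neq 2$). The corresponding irreducible character satisfies $\chi^\lambda \otimes \operatorname{sgn} = \chi^{\lambda'} = \chi^\lambda$, so for any odd $\sigma$, $\chi^\lambda(\sigma) = \operatorname{sgn}(\sigma)\chi^\lambda(\sigma) = -\chi^\lambda(\sigma)$, forcing $\chi^\lambda(\sigma) = 0$.

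For the two-way inclusion $\van(\Sym_n) \cap \Alt_n = \van(\Alt_n)$, I would invoke the branching rule from $\Sym_n$ to $\Alt_n$: for $\lambda \neq \lambda'$ the restriction $\chi^\lambda|_{\Alt_n}$ is irreducible, while for $\lambda = \lambda'$ it splits as $\phi^+_\lambda + \phi^-_\lambda$ into two associate irreducible characters of $\Alt_n$ which coincide on every $\Alt_n$-class except the two into which the $\Sym_n$-class of cycle type $\mu(\lambda) := (h_1, \ldots, h_d)$ (the diagonal hook lengths of $\lambda$) splits; on that exceptional class the classical formula \cite{JK} gives $\phi^\pm_\lambda = \tfrac12\bigl(\epsilon \pm \sqrt{\epsilon h_1 \cdots h_d}\bigr)$ with $\epsilon = (-1)^{(n-d)/2}$. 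The key observation is that these values are nonzero as soon as $h_1 \cdots h_d > 1$, which holds for every self-conjugate $\lambda \vdash n$ with $n \geq 3$; in particular $\chi^\lambda = \phi^+_\lambda + \phi^-_\lambda = \epsilon \neq 0$ on this class as well. Both inclusions then follow by a short case analysis: given $\phi(\sigma) = 0$ for some $\phi \in \Irr(\Alt_n)$ and $\sigma \in \Alt_n$, either $\phi = \chi^\lambda|_{\Alt_n}$ with $\lambda \neq \lambda'$ (whence $\chi^\lambda(\sigma) = 0$), or $\phi = \phi^\pm_\lambda$ for some $\lambda = \lambda'$, in which case the nonvanishing above forces $\sigma$'s cycle type to differ from $\mu(\lambda)$, whence $\phi^+_\lambda(\sigma) = \phi^-_\lambda(\sigma) = 0$ and $\chi^\lambda(\sigma) = 0$. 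The reverse direction is symmetric.

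The main subtlety is the control of the exceptional class of cycle type $\mu(\lambda)$ when $\lambda$ is self-conjugate: without the explicit nonvanishing of $\phi^\pm_\lambda$ there, the two associate constituents could \emph{a priori} take nonzero values summing to zero, thereby breaking the equivalence between $\chi^\lambda$ vanishing on $\sigma$ and some constituent of $\chi^\lambda|_{\Alt_n}$ vanishing on $\sigma$. The James–Kerber formula rules this out for all $n \geq 3$, and with that in hand the case analysis above completes the argument.
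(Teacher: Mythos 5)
Your proposal is correct and follows essentially the same route as the paper: produce a self-conjugate partition to kill $\Sym_n-\Alt_n$, then use the James--Kerber branching/splitting formulas to show $\nv(\Sym_n)=\nv(\Alt_n)$, with the same key point that the values $\tfrac12\bigl(\epsilon\pm\sqrt{\epsilon h_1\cdots h_d}\bigr)$ on the exceptional split class are nonzero because $h_1\cdots h_d>1$ for $n\ge 3$. The only cosmetic difference is that you derive the vanishing of $\chi^\lambda$ on odd permutations from $\chi^\lambda\otimes\operatorname{sgn}=\chi^\lambda$, whereas the paper notes that $\chi^\lambda$ is induced from $\Alt_n$; these are equivalent observations.
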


\begin{proof} It suffices to show that $\nv(\Sym_n)=\nv(\Alt_n)$.
Since $n\ge 3$, $\Sym_n$ possesses a self-conjugate partition $\lambda$ of $n$ and so $\chi^\lambda$, when restricted to $\Alt_n$, splits into the sum of two irreducible characters, say $\chi^{\lambda\pm}$ of the same degree. Thus $\chi^\lambda=\mu^{\Sym_n}$ for each $\mu\in\{\chi^{\lambda\pm}\}$. It follows that $\chi^\lambda$ vanishes on $\Sym_n-\Alt_n$ and so $\Sym_n-\Alt_n\subseteq \van(\Sym_n)$. Therefore, $\nv(\Sym_n)\subseteq \Alt_n$.

Fix now $g\in\Alt_n$ and a partition $\la$ of $n$. Let $\theta$ be any irreducible character appearing in the restriction of $\chi^\la$ to $\Alt_n$. We will prove that $\theta(g)=0$ if and only if $\chi^\la(g)=0$. Together with the previous paragraph this will prove that $\nv(\Sym_n)=\nv(\Alt_n)$. If $\la\not=\la'$ then $\theta=(\chi^\lambda)_{\Alt_n}$, so clearly $\theta(g)=0$ if and only if $\chi^\la(g)=0$. Thus we may assume that $\la=\la'$, in which case $\theta=\chi^{\la\pm}$. Let $h(\lambda)=(h_{11},h_{22},\cdots,h_{kk})$ be the partition of $n$, where $h_{ij}$ is the $(i,j)$-hook length at the position $(i,j)$ of the Young diagram $[\lambda]$ of $\lambda$ and $k$  is the length of the main diagonal of $[\lambda]$. By \cite[Corollary 2.4.8, Theorem 2.5.13]{JK} if the cycle partition of $g$  is different from $h(\lambda)$, then $\theta(g)=\chi^\lambda(g)/2$, so in this case $\theta(g)=0$ if and only if $\chi^\la(g)=0$. If on the other hand $g$ has cycle partition $h(\la)$, then 
 $\chi^\la(g)=t$ and $\theta(g)=(t\pm\sqrt{t\prod_{i=1}^kh_{ii}})/2$ for some $t\in\{\pm 1\}$. Since $n\geq 3$, so that $\prod_{i=1}^kh_{ii}>1$, and then in this case $\theta(g)\not=0$ and $\chi^\la(g)\not=0$. So $\nv(\Sym_n)=\nv(\Alt_n)$.

Since $\Sym_n$ is a disjoint union of $\nv(\Sym_n)$ and $\van(\Sym_n)$, the lemma follows.
\end{proof}

We are now ready to prove Theorem \ref{th: symmetric groups}.

\begin{proof}[\textbf{Proof of Theorem \ref{th: symmetric groups}}] The result is clear if $n\le 4$ or $n=7$, so we may assume that $n\geq 5$ with $n\not=7$. Since $\pv(\Sym_n)+\pnv(\Sym_n)=1$, it suffices to show that $\pnv(\Sym_n)\leq \pnv(\Sym_7)=193/2520$.

Assume that for some $a\geq 0$ and $r\geq 1$ there exists an $r$-core partition $\mu=(\mu_1,\ldots,\mu_h)$ of $n-ar$ (that is a partition with no hook length divisible by $r$). Let $\la:=(\mu_1+ar,\mu_2,\ldots,\mu_h)$. If $g\in\Sym_n$ has more than $a$ cycles of length $r$, then $\chi^\la(g)=0$ by \cite[2.4.7]{JK}. 

By \cite[Theorem 1]{GO} we then have that if $g\in\nv(\Sym_n)$, then $g$ has cycle partition $(3^a,2^b,1^{n-3a-2b})$ for some $a,b\geq 0$. We will first find bounds on $a$ and $b$ by studying $2$- and $3$-cores partitions.

Let $\tau(k):=(k,k-1,\ldots,1)$. Then $\tau(k)$ is a $2$-core and $|\tau(k)|=k(k+1)/2$ (where for any partition $\psi$, $|\psi|$ is the sum of the parts of $\psi$). So $|\tau(k)|$ is even if $k\equiv 0,1\Md{4}$ and $|\tau(k)|$ is odd else. Let $k$ be maximal such that $|\tau(k)|\equiv n\Md{2}$ and $|\tau(k)|\leq n$. Then $n\leq |\tau(k+3)|-2$ (since for some $1\leq x\leq 3$ we have that $|\tau(k+x)|\equiv n\Md{2}$). Further
\[\frac{|\tau(k+3)|-2-|\tau(k)|}{2}=\frac{3k+4}{2}\leq\frac{3}{\sqrt{2}}\sqrt{|\tau(k+3)|-2}.\]
Since $|\tau(k)|\leq n\leq |\tau(k+3)|-2$, it follows that
\[\frac{n-|\tau(k)|}{2}\leq\frac{3}{\sqrt{2}}\sqrt{n}.\]

Consider now $3$-core partitions. For $k\geq 0$ let $\varphi(k):=(2k,2k-2,\ldots,2)$ and for $k\geq 1$ let $\psi(k):=(2k-1,2k-3,\ldots,1)$. Then $|\varphi(k)|=k(k+1)$, so $|\varphi(k)|\equiv 2\Md{3}$ if $k\equiv 1\Md{3}$ and $|\varphi(k)|\equiv 0\Md{3}$ else, while $|\psi(k)|=k^2$, so $|\psi(k)|\equiv 0\Md{3}$ if $k\equiv 0\Md{3}$ and $|\psi(k)|\equiv 1\Md{3}$ else. Further
\begin{align*}
\frac{|\varphi(k+3)|-3-|\varphi(k)|}{3}&=2k+3\leq2\sqrt{|\varphi(k+3)|-3},\\
\frac{|\psi(k+3)|-3-|\psi(k)|}{3}&=2k+2\leq2\sqrt{|\psi(k+3)|-3}.
\end{align*}
In particular there exists a $3$-core $\xi$ with $|\xi|=n-3m$ with $0\leq m\leq 2\sqrt{n}$.

If $g\in\nv(\Sym_n)$ has cycle partition $(3^a,2^b,1^{n-3a-2b})$, it then follows that $a\leq\lfloor 2\sqrt{n}\rfloor$ and $b\leq \lfloor 3\sqrt{n/2}\rfloor$. In particular $g$ moves at most $m_n$ points, where
\[m_n:=3\lfloor 2\sqrt{n}\rfloor+2\lfloor 3\sqrt{n/2}\rfloor.\]
So $g$ is contained in one of the $\binom{n}{m_n}$ $\Sym_n$-conjugates of $\Sym_{m_n}$. If $n\geq 106$ then $n-m_n\geq 4$, so
\[\pnv(\Sym_n)\leq\frac{\binom{n}{m_n}|\Sym_{m_n}|}{|\Sym_n|}=\frac{1}{(n-m_n)!}\leq \frac{1}{4!}<193/2520.\]

So we only still have to consider $5\leq n\leq 105$ with $n\not=7$. In this case better upper bounds for $a$ and $b$ can be found by studying small $2$- and $3$-cores (for $3$-cores note that any partition of the form $(2c+d,2c+d-2,\ldots,d+2,d^e,(d-1)^2,(d-2)^2,\ldots,1^2)$ with $c,d\geq 0$ and $1\leq e\leq 2$ is a $3$-core). Further $b$ is even, since by Lemma \ref{lem: Alternating groups} non-vanishing elements of $\Sym_n$ are contained in $\Alt_n$. This allows to check that $n-\overline{m}_n\geq 4$, where $\overline{m}_n$ is the maximal number of moved points by any non-vanishing element of $\Sym_n$, unless possibly if $n\in\{11,13\}$. Reasoning as above we may then assume that $n\in\{11,13\}$, in which case $a\in\{0,1\}$ and $b\in\{0,2,4\}$ and then the theorem holds by considering the sizes of the corresponding conjugacy classes.
\end{proof}

Note that the proof of this theorem actually proves that $\pnv(\Sym_n)\to 0$, that is $\pv(\Sym_n)\to 1$. Further the number of non-vanishing conjugacy classes of $\Sym_n$ can be bounded above by $Cn$ for some constant $C$.

\begin{proof}[\textbf{ Proof of Theorem \ref{th: simple groups}}]
In view of Lemma \ref{lem:quotient}, we may assume that $N=1$, so $G$ is almost simple with socle $S$ and $G/S$ is abelian.

It is well-known that (see \cite[Corollary 2]{GO}) for 
every prime divisor $p$ of $|S|$, $S$ has an irreducible character of $p$-defect zero except for the following cases:
\begin{enumerate}
\item $p=2$ and $S$ is isomorphic to one of the following simple groups: $\textrm{M}_{12}, \textrm{M}_{22}, \textrm{M}_{24},$ $ \textrm{J}_{2}, \textrm{HS}, \textrm{Suz}, \textrm{Ru}, \textrm{Co}_{1},\textrm{Co}_{3}, \textrm{B}$ and $\Alt_n$ for some integer $n\ge 5.$
\item $p=3$ and $S$ is isomorphic to $\textrm{Suz},\textrm{Co}_{3}$ or $\Alt_n$ for certain integer $n\ge 5.$
\end{enumerate} 

It follows from \cite[Theorem 8.17]{Isaacs} that if $S$ has an irreducible character of $p$-defect zero for each prime divisor of $|S|$, then $\van(S)=S-\{1\}$ and thus $$\pv(S)=(|S|-1)/|S|=1-1/|S|\ge 1-1/60=59/60>1067/1260.$$

If $S$ is one of the sporadic simple groups in Cases (i) and (ii) above, then we can check directly using \cite{GAP} that $\pv(S)>1067/1260$. Assume now that $S\cong\Alt_n$ with $n\ge 5.$  By Lemma \ref{lem: Alternating groups} and Theorem \ref{th: symmetric groups}, we have  $$\pv(\Alt_n)=2\pv(\Sym_n)-1\ge  2\cdot\frac{2327}{2520}-1=\frac{1067}{1260}=\pv(\Alt_7).$$ 

Thus we may now assume that $|G:S|>1$. Let $g\in G-S$.  Then $gS\in G/S$ induces an outer automorphism, say $\alpha$, of $S$. By \cite[Theorem C]{FS}, $\alpha$ does not fix some conjugacy class of $S$. By Brauer's permutation lemma \cite[Theorem 6.32]{Isaacs}, $\alpha$ does not fix some irreducible character, say $\theta$, of $S$. Now let $I=I_G(\theta)$ be the inertial group of $\theta$ in $G$. Since $S=G'\leq I\neq G$, $I$ is a proper normal subgroup of $G$. Therefore $|G:I|\ge 2.$ Let $\varphi\in\Irr(I)$ be lying above $\theta$. Then $\chi=\varphi^G\in\Irr(G)$ by Clifford's corresponding theorem \cite[Theorem 6.11]{Isaacs}. It follows that $\chi$ vanishes on the set $G-I$. Thus $\chi(g)=0$. Hence $G-S\subseteq\van(G)$.

If $S$ has a defect zero irreducible character for each prime $p$, then $S-\{1\}\subseteq \van(G)$ and so $\van(G)=G-\{1\}$ by Lemma \ref{lem:defect 0}. If $S$ is one of the sporadic simple groups in cases (i) and (ii) the theorem can be checked using \cite{GAP}. So we may assume that $S\cong\Alt_n$ with $n\ge 5$. Since $\Alt_6$ has defect zero characters for any $p$, we may also assume that $n\not=6$, so $G\cong\Sym_n$ in which case the result follows by Theorem \ref{th: symmetric groups}.
\end{proof}


\section{Lower bound for proportions of vanishing elements in finite non-abelian groups}\label{sec3}

We now prove the remaining results in the introduction.

Let $G$ be a  finite non-abelian group. Let $N\unlhd G$ be maximal such that $G/N$ is non-abelian.  Then $(G/N)'$ is a unique minimal normal subgroup of $G$. If $G/N$ is solvable, then we can apply Lemma \ref{lem:minimal nonabelian} together with Lemmas \ref{lem2} and \ref{lem3} to obtain a lower bound for $\pv(G)$. We next consider the case when $G/N$ is non-solvable. In this situation, $(G/N)'$ is perfect.

\begin{prop}\label{prop:perfect derived subgroup}
Let $G$ be a finite non-abelian group. Suppose that $G'=G''$ is perfect. Then $\pv(G)>3/4.$
\end{prop}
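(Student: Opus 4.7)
The plan is first to reduce via Lemma~\ref{lem:quotient} with $N \unlhd G$ chosen maximal such that $G/N$ is non-abelian (as discussed in the paragraph preceding this proposition). Then $(G/N)' = G'N/N$ is the unique minimal normal subgroup of $G/N$, and the hypothesis $G' = G''$ passes to the quotient since $(G/N)'' = [G',G']N/N = G''N/N = G'N/N = (G/N)'$. Replacing $G$ by $G/N$, I assume that $G' = G''$ is the unique minimal normal subgroup of $G$. Being characteristically simple and perfect, $G' \cong T^m$ for some non-abelian simple $T$ and $m \geq 1$; and since $Z(G') = 1$, the normal subgroup $C_G(G')$ intersects $G'$ trivially and hence is trivial by the uniqueness of $G'$ as a minimal normal subgroup. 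So $G$ embeds in $\Aut(T) \wr \Sym_m$, with the abelian quotient $G/G'$ acting transitively on the $m$ simple factors.

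When $m = 1$, $G$ is almost simple with socle $T$ and $G/T$ abelian, so Theorem~\ref{th: simple groups} (applied with $N = 1$, $S = T$) gives $\pv(G) \geq \pv(\Alt_7) = 1067/1260 > 3/4$. For $m \geq 2$, the transitive abelian action of $G/G'$ on the $m$ factors is regular, so its kernel $K \unlhd G$ satisfies $|G:K| = m$. For any non-linear $\chi \in \Irr(T)$, the character $\theta := \chi \otimes \mathbf{1}^{\otimes(m-1)} \in \Irr(G')$ has $G$-orbit of size at least $m$ (containing the analogous characters supported at each factor position), so $I_G(\theta) \subseteq K$. By Clifford theory an irreducible character of $G$ lying above $\theta$ is induced from $I_G(\theta)$ and hence vanishes on $G \setminus K$, giving $\pv(G) \geq (m-1)/m$. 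This already handles $m \geq 5$; for $m = 4$ it gives $\pv(G) \geq 3/4$, and strict inequality follows from Lemma~\ref{lem:defect 0} which provides at least one additional vanishing element in the non-solvable subgroup $G' \subseteq K$.

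For the remaining cases $m \in \{2,3\}$, I supplement this with vanishing elements inside $K$. Choose a non-linear $\chi \in \Irr(T)$ stable under the image of $K/G'$ acting on $\Irr(T)$ through outer automorphisms (such $\chi$ exists for every non-abelian simple $T$). Then $\chi^{\otimes m} \in \Irr(G')$ is $G$-invariant, and any irreducible character of $G$ lying above it restricts to a nonzero scalar multiple of $\chi^{\otimes m}$ on $G'$, and so vanishes at $(t_1,\ldots,t_m) \in G'$ whenever some $t_i$ is a zero of $\chi$. Ranging over such $\chi$, the vanishing elements of $G$ in $G'$ cover a subset of $G'$ of proportion at least $1 - \pnv(T)^m$, and Theorem~\ref{th: simple groups} applied to $T$ gives $\pnv(T) \leq 193/1260 < 1/4$, so this proportion is at least $1 - 4^{-m}$. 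Combining this with the $G \setminus K$ contribution, and when needed (for $m = 2$ with $|G/G'|$ large) with further vanishing in $K \setminus G'$ produced by characters pulled back through the projections $\pi_i : K \to \Aut(T)$ onto the almost simple images $\pi_i(K)$ (each of which has abelian quotient over its socle, so $\pv(\pi_i(K)) \geq \pv(\Alt_7)$ again by Theorem~\ref{th: simple groups}), yields $\pv(G) > 3/4$. The main obstacle I anticipate is the case $m = 2$ with $|G/G'|$ large, where a careful combination of contributions from $G \setminus K$, $K \setminus G'$, and $G'$ is required to push past $3/4$.
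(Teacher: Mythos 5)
Your overall architecture --- reduce to the case where $G'$ is the unique minimal normal subgroup with $\Centralizer_G(G')=1$, write $G'\cong T^m$, handle $m=1$ by Theorem \ref{th: simple groups}, and for $m\geq 2$ obtain $G\setminus K\subseteq\van(G)$ from characters induced from the kernel $K$ of the permutation action --- matches the paper, and those parts (including $m\geq 4$) are sound. But the argument does not close for $m\in\{2,3\}$, and you essentially concede this. There are two genuine gaps. First, the claim that the vanishing elements of $G$ inside $G'$ cover a proportion at least $1-\pnv(T)^m$ goes the wrong way: you range only over those non-linear $\chi\in\Irr(T)$ that are suitably invariant (under $K$, and in fact one also needs invariance under the automorphisms of $T$ induced by the elements permuting the factors, a further identification issue you do not address). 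The set of elements of $T$ that are zeros of some \emph{invariant} character can be far smaller than $\van(T)$: for instance, if the only available invariant character were the Steinberg character of a group of Lie type, its zeros would be just the $p$-singular elements, a vanishingly small proportion. Restricting the supply of characters can only shrink the covered set, so $1-\pnv(T)^m$ is an upper bound for what this construction yields, not a lower bound. Second, your proposed source of vanishing elements in $K\setminus G'$ (characters pulled back through the projections $\pi_i\colon K\to\Aut(T)$) produces at best vanishing elements of $K$; a zero of an irreducible character of $K$ need not be a zero of any irreducible character of $G$, so this contributes nothing to $\van(G)$ as written. With only the $(m-1)/m$ contribution from $G\setminus K$ secured, the cases $m\in\{2,3\}$ (in particular $m=2$ with $|K:G'|$ large, as you note yourself) remain open.

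The paper closes exactly this gap by a different mechanism. Every $g\in K\setminus G'$ induces a non-trivial outer automorphism on $S_1$ (since $G'=S_1\Centralizer_G(S_1)$), so by \cite[Theorem C]{FS} and Brauer's permutation lemma there is $\mu\in\Irr(S_1)$ with $\mu^g\neq\mu$; the character of $G$ induced from the inertia group of $\mu\times 1\times\cdots\times 1$, a proper normal subgroup containing $G'$, then vanishes at $g$. This yields $G\setminus G'\subseteq\van(G)$, hence $\pv(G)>(k-1)/k$ with $k=|G:G'|$, and reduces everything to $k=m\in\{2,3\}$ with $K=G'$. In that residual case $G/G'$ is cyclic and the stabilizer of each factor acts by inner automorphisms only, so the diagonal character $\theta_j\times\cdots\times\theta_j$ built from an \emph{arbitrary} $\theta_j\in\Irr(S_j)$ vanishing at the $j$-th coordinate of a given $g\in\van(G')$ is $G$-invariant, extends to $G$, and vanishes at $g$; this gives $\van(G')\subseteq\van(G)$ with no invariance restriction on the characters, whence $\pv(G)\geq(m-1)/m+\pv(G')/m>(2m-1)/(2m)\geq 3/4$. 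You would need to incorporate this Feit--Seitz step, or some substitute that both handles $K\setminus G'$ and removes the invariance restriction on $\chi$, for your proof to go through.
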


\begin{proof} We proceed by induction on $|G|$. Since $G$ is non-abelian and $G'$ is perfect, $G'$ is non-abelian. Let $G'/N$ be a chief factor of $G$. Since $G'$ is perfect, $G'/N$ is perfect and so $G/N$ is non-solvable. If $N$ is non-trivial, then by Lemma \ref{lem:quotient} and induction, we have $\pv(G)\ge \pv(G/N)>3/4.$ Thus we may assume that $N=1$ and so $G'$ is a minimal normal subgroup of $G$.  Let $C=\Centralizer_G(G').$ Then $G/C$ also satisfies the hypothesis of the proposition and if $C\neq 1$, then $\pv(G)\ge \pv(G/C)>3/4$ again. Therefore, we can assume that $C=1$ and hence $G'$ is the unique minimal normal subgroup of $G$. Write $G'=S_1\times S_2\times\dots\times S_n$, where $S_i\cong S$ for $1\leq i\leq n$, $S$ is a non-abelian simple group and $n\ge 1$ is an integer.

If $n=1$, then $G$ is almost simple with simple socle $G'$ and the result follows by applying Theorem \ref{prop:almost simple with abelian quotient}. So we may assume that $n\ge 2.$ Let $\Omega=\{S_1,S_2,\dots,S_n\}$. Then $G$ acts transitively on $\Omega$ with a point stabilizer $B:=\Normalizer_G(S_1).$ Note that $G'\leq B\unlhd G$ and thus $B=\Normalizer_G(S_i) $ for all $i$. Hence $B$ is also the kernel of the action of $G$ on $\Omega$ and $|G:B|=n.$  

 Let $\lambda\in\Irr(S_1)$ be a non-principal character and let $\theta=\lambda\times 1\times\cdots\times 1\in\Irr(G')$. We see that $I_G(\theta)\leq B$. Let $\varphi\in\Irr(B)$ be lying above $\theta$. It follows from Clifford's theory that $\varphi^G\in\Irr(G)$. Since $B\unlhd G$, $\varphi^G$ vanishes on $G-B$ and so $G-B\subseteq \van(G)$. 
 
Let $g\in B-G'.$ Since $G'\leq S_i\Centralizer_G(S_i)\unlhd G$ and $G$ acts transitively on $\Omega$, we deduce that $G'=S_1\Centralizer_G(S_1)$. Since $g\in B-G'=\Normalizer_G(S_1)-S_1\Centralizer_G(S_1)$,   $g$  induces a nontrivial outer automorphism on $S_1$.  By Brauer's permutation lemma and \cite[Theorem C]{FS}, there exists $\mu\in\Irr(S_1) $ such that $\mu^g\neq \mu.$ Let $\phi=\mu\times 1\times \dots\times 1\in\Irr(G')$. We see that $\phi^g\neq \phi$ and so $g\in G-I_G(\phi)$, where $G'\leq I_G(\phi)\unlhd G$. Again, if $\psi\in\Irr(I_G(\phi))$ lying above $\phi$, then $\psi^G\in\Irr(G)$ and thus $\psi^G$ vanishes on $G-I_G(\phi)$. In particular, $g\in\van(G)$ and thus $B-G'\subseteq \van(G).$

We have shown that $G-G'\subseteq \van(G)$. Since $G'$ is non-solvable, $G'\cap \van(G)\neq \emptyset$ by Lemma \ref{lem:defect 0}. It follows that $G-G'$ is a proper subset of $\van(G)$ and hence $$\pv(G)> |G-G'|/|G|=(k-1)/k,$$ where $k=|G:G'|=|G:B|\cdot |B:G'|=n|B:G'|$. If $k\ge 4$, then $(k-1)/k\ge3/4$ and so $\pv(G)>3/4$ as wanted. Thus we can assume that $k\leq 3.$ As $k=n|B:G'|$ and $n\ge 2,$ we must have that $k=n\in \{2,3\}$ and $B=G'.$ 

We next claim that $(G-G')\cup \van(G')\subseteq \van(G)$. Fix $g\in \van(G')$ and $\theta=\theta_1\times\theta_2\times\cdots\times\theta_n\in\Irr(G')$ with $\theta(g)=\prod_{i=1}^n\theta_i(g_i)=0$, where $g=(g_1, g_2,\dots, g_n)\in S_1\times S_2\times\dots\times S_n$ and $\theta_i\in \Irr(S_i)$, $1\leq i\leq n.$ It follows that $\theta_j(g_j)=0$ for some $j.$ Now let $\phi=\theta_j\times\theta_j\times\cdots\times\theta_j\in\Irr(G')$. Then $\phi$ is $G$-invariant and since $G/G'$ is cyclic, $\phi$ extends to $\phi_0\in\Irr(G)$ by \cite[Corollary 11.22]{Isaacs}. Clearly $\phi_0(g)=\phi(g)=0$ and so $g\in\van(G)$. Thus $$\pv(G)\ge (|G-G'|+|\van(G')|)/|G|=(n-1)/n+\pv(G')/n.$$ Note that $\pv(G')\ge \pv(S)>1/2$ by Lemma \ref{lem:quotient} and Theorem \ref{th: simple groups}. Hence, as $2n\ge 4,$ $$\pv(G)>(n-1)/n+1/2n=(2n-1)/2n\ge 3/4.$$ The proof is now complete.
 \end{proof}

From the proofs of Theorem \ref{prop:almost simple with abelian quotient} and Proposition \ref{prop:perfect derived subgroup} for non-solvable groups and  Lemma  \ref{lem:minimal nonabelian} for solvable groups, we obtain the following.
\begin{cor}\label{cor1} Let $G$ be a finite group. Suppose that $G'$ is the unique minimal normal subgroup of $G$ and that $|G:G'|>1$. Then there exist a character $\psi\in\Irr(G)$ and  a subgroup $G'\leq L\leq G$ with $|G:L|>1$ such that $\psi$ vanishes on $G-L.$
\end{cor}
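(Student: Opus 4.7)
The plan is to split into the cases $G$ solvable and $G$ non-solvable, and in each case read off $\psi$ and $L$ directly from the proofs already given.

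First, assume $G$ is solvable. Then Lemma \ref{lem:minimal nonabelian} applies. In case (a), $G$ is a $p$-group with $\Center(G)$ cyclic and non-trivial, and some $\chi\in\Irr(G)$ of degree $f$ vanishes on $G-\Center(G)$. Since in a non-trivial $p$-group every minimal normal subgroup lies in the centre, the unique minimal normal subgroup $G'$ is contained in $\Center(G)$; because $G$ is non-abelian, $|G:\Center(G)|=f^2\geq 4$. So $L=\Center(G)$ and $\psi=\chi$ satisfy the conclusion. In case (b), $G$ is a Frobenius group with abelian kernel $G'$ and complement of order $f\geq 2$, and some $\chi\in\Irr(G)$ of degree $f$ vanishes on $G-G'$; take $L=G'$ and $\psi=\chi$.

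Now assume $G$ is non-solvable. Since $G'$ is the unique minimal normal subgroup of $G$ and cannot be abelian, $G'=S_1\times\cdots\times S_n$ for some non-abelian simple group $S$ with $S_i\cong S$. If $n\geq 2$, we reproduce the construction in the proof of Proposition \ref{prop:perfect derived subgroup}: let $B=\Normalizer_G(S_1)$, pick a non-principal $\lambda\in\Irr(S_1)$, set $\theta=\lambda\times 1\times\cdots\times 1\in\Irr(G')$, and choose $\varphi\in\Irr(B)$ over $\theta$; then $\varphi^G\in\Irr(G)$ and, since $B\unlhd G$, $\varphi^G$ vanishes on $G-B$, so $L=B$ and $\psi=\varphi^G$ work (with $G'\leq B$ and $|G:B|=n\geq 2$). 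If $n=1$, then $G$ is almost simple with socle $G'=S$, and the hypothesis $|G:G'|>1$ gives some $g\in G-S$ inducing a non-trivial outer automorphism on $S$; by Brauer's permutation lemma and \cite[Theorem C]{FS} there is $\theta\in\Irr(S)$ not fixed by $g$, so $I=I_G(\theta)$ satisfies $G'\leq I\neq G$, and for any $\varphi\in\Irr(I)$ over $\theta$, Clifford's correspondence gives $\varphi^G\in\Irr(G)$ vanishing on $G-I$; take $L=I$ and $\psi=\varphi^G$.

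The corollary is thus really a synthesis: each of the preceding proofs already produces, internally, a character vanishing off a proper overgroup of $G'$, and one only has to isolate that character and that overgroup from the fuller argument about proportions. The only verification worth flagging is that $G'\leq\Center(G)$ in solvable case (a), which follows from the uniqueness of $G'$ as the minimal normal subgroup together with the non-triviality of $\Center(G)$ in a $p$-group; the containment $G'\leq I_G(\theta)$ in the non-solvable cases is automatic since $\theta$ is an irreducible character of $G'$.
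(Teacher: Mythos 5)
Your proposal is correct and follows essentially the same route as the paper, whose proof of this corollary is precisely the observation that the required $\psi$ and $L$ can be read off from Lemma \ref{lem:minimal nonabelian} in the solvable case and from the proofs of Theorem \ref{prop:almost simple with abelian quotient} and Proposition \ref{prop:perfect derived subgroup} in the non-solvable case. (Your one flagged point, $G'\leq\Center(G)$ in case (a), is even more immediate than you suggest: $G/\Center(G)$ is abelian there, so $G'\leq\Center(G)$ directly.)
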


\begin{proof}[\textbf{Proof of Theorem \ref{th:lower bound}}] We will prove the theorem by induction on $|G|$. Let $G$ be a finite non-abelian group.  Let $N$ be a minimal normal subgroup of $G$. If $G/N$ is non-abelian, then $\pv(G/N)\ge 1/2$ by induction since $|G/N|<|G|$ and thus $\pv(G)\ge \pv(G/N)\ge 1/2$ by Lemma \ref{lem:quotient}. Therefore $G/N$ is abelian for every  nontrivial normal subgroup $N$ of $G$. It follows that $G'$ is the unique minimal normal subgroup of $G$.

Assume  that $G$ is non-solvable. Then $G'$ is perfect and so by Proposition \ref{prop:perfect derived subgroup}, we have $\pv(G)>3/4>1/2.$ Assume next that $G$ is solvable.  By Lemma \ref{lem:minimal nonabelian}, $G$ is either a non-abelian $p$-group for some prime $p$ or $G$ is a Frobenius group with  Frobenius kernel  $G'$ and  $n=|G:G'|\ge 2$. If the former case holds, then $\pv(G)\ge 1-1/p^2\ge 1-1/4=3/4>1/2$ by Lemma \ref{lem2}. In the latter case, we have $\pv(G)=1-1/n\ge 1/2$ by Lemma \ref{lem3}. 
\end{proof}

\begin{lem}\label{lem4}
Let $G$ be a finite group and let $N\unlhd G$. Suppose that $G/N$ is a Frobenius group with kernel $F/N$, $|G:F|=2$ and that $F$ is non-abelian. Then $\pv(G)\geq 13/18.$
\end{lem}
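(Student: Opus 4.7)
Apply Lemma \ref{lem3} to the Frobenius quotient $G/N$ to see $G-F\subseteq\van(G)$, so $\pv(G)\ge 1/2$. It therefore suffices to show $|F\cap\van(G)|\ge 4|F|/9$, since then $\pv(G)\ge 1/2+(1/2)(4/9)=13/18$.

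Since $F$ is non-abelian, $F'\ne 1$ is contained in $N$ and $F/F'$ is abelian. Thus $G/F'$ meets the hypotheses of Lemma \ref{lem:abelian} with $N/F'$ in place of $N$, which yields the dichotomy $\pv(G/F')\ge 3/4$ or $\pv(G/F')=1/2$. The first case is immediate via Lemma \ref{lem:quotient}, so I assume $\pv(G/F')=1/2$ from here on.

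I now induct on $|G|$. If some non-trivial $M\unlhd G$ with $M\le N$ makes $F/M$ non-abelian, then $G/M$ also satisfies the hypotheses of the present lemma (the Frobenius quotient $G/N\cong(G/M)/(N/M)$ is preserved), so induction supplies $\pv(G/M)\ge 13/18$ and Lemma \ref{lem:quotient} transfers the bound to $G$. Otherwise $F'$ is contained in every such $M$, so $F'$ is the unique minimal normal subgroup of $G$ contained in $N$, hence a minimal normal subgroup of $G$ and therefore characteristically simple.

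To produce the vanishing inside $F$, select a non-principal $\theta\in\Irr(F')$ with inertia group $T:=I_G(\theta)$. When $T\ne G$, Clifford's theorem \cite[Theorem 6.11]{Isaacs} yields an irreducible $\chi\in\Irr(G\mid\theta)$ induced from $T$ and vanishing on $G\setminus T$; combined with $G-F\subseteq\van(G)$, a short counting step supplies the required $4|F|/9$ elements. The delicate sub-case is that every non-principal $\theta\in\Irr(F')$ is $G$-invariant, which forces the $G$-action on $F'$ to be trivial (when $F'$ is abelian this gives $F'\le\Center(G)$ and hence $F'\cong C_p$, and the non-abelian-simple case reduces to a central extension situation). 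In this residual case I take a non-linear $G$-invariant $\psi\in\Irr(F)$ (which exists because $F$ is non-abelian), extend it to $\chi\in\Irr(G)$ via the cyclic quotient $G/F\cong C_2$, and bound $|\{f\in F:\psi(f)=0\}|$ from below by exploiting the central Sylow-$2$ structure on $F/F'$ furnished by the proof of Lemma \ref{lem:abelian}. The main obstacle is precisely this last quantitative step in the universally $G$-invariant case.
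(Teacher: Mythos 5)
Your opening reduction is sound and matches the paper's target: since $G-F\subseteq\van(G)$ contributes exactly $1/2$, it suffices to produce $4|F|/9$ vanishing elements inside $F$. But the mechanism you propose for producing them has two genuine holes. First, in the case $T:=I_G(\theta)\neq G$ your ``short counting step'' silently assumes $F\not\leq T$. If $T=F$ (which happens whenever $\theta$ is $F$-invariant but not $G$-invariant --- for instance whenever $F'\leq\Center(F)$, as in any extraspecial or nilpotent-of-class-2 situation), the induced character vanishes only on $G-T=G-F$ and contributes nothing inside $F$; to extract zeros in $F$ you would have to analyze where $\varphi+\varphi^h$ vanishes on $F$, which you have not done. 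Second, the residual case in which every $\theta\in\Irr(F')$ is $G$-invariant is left open by your own admission, and the step you sketch there is not obviously repairable: a non-linear $G$-invariant $\psi\in\Irr(F)$ need not exist at all, and even when it does it need not vanish on $4|F|/9$ of $F$ (the degree-$3$ character $\chi^{(3,1)}$ of $\Sym_4$ vanishes on only a third of the group). As written this is a plan whose hardest steps are missing, not a proof.

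The paper fills exactly these holes by working with $\Irr(F)$ rather than $\Irr(F')$: Corollary \ref{cor1}, applied to a minimal non-abelian quotient of $F$, yields $\psi\in\Irr(F)$ vanishing on $F-U$ for some $U\unlhd F$ of prime index $s$. If $\psi^h=\psi$ then $\psi$ extends to $G$ and $G-U\subseteq\van(G)$, giving $1-1/(2s)\geq 3/4$. If $\psi^h\neq\psi$ then $\psi^G\in\Irr(G)$ restricts to $\psi+\psi^h$, which vanishes on $F-(U\cup U^h)$; when $U^h\neq U$ this set has size $|F|(1-1/s)^2$, giving $\tfrac12+\tfrac12(1-1/s)^2\geq 13/18$ for $s\geq 3$, and the exceptional case $s=2$, $U^h\neq U$ is eliminated because $G/(U\cap U^h)$ is then a $2$-group of order $8$ (Lemma \ref{lem2}). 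If you want to salvage your argument, replacing ``a non-principal $\theta\in\Irr(F')$'' by the character supplied by Corollary \ref{cor1}, together with the $U$ versus $U^h$ count, is the missing idea.
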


\begin{proof}
Let $H/N$ be the Frobenius complement of $G/N$. Then $|H/N|=2$. Hence $H/N$ is cyclic of order $2$ and so $F/N$ is abelian by \cite[Lemma 7.21]{Isaacs}.  It follows from Lemma \ref{lem3} that $\pv(G/N)=1/2$ and $\van(G/N)=G/N-F/N$ which implies that $G-F\subseteq \van(G)$. 

Write $H=\langle h\rangle N$ for some $h\in H-N.$ Note that $h^2\in N$. Since $F$ is non-abelian, by Corollary \ref{cor1}, there exist normal subgroups $K\unlhd U$ of $F$,  and a character $\psi\in\Irr(F/K)$  with $|F:U|=s>1$ such that $\psi$ vanishes on $F-U$. We may assume that $U$ is maximal normal in $F$ and so $s\ge 2$ is a prime. Hence, either $U\unlhd G$ or $F=UU^h$ and $|L|=|F|/s^2$, where $L:=U\cap U^h\unlhd G.$

If $\psi^h=\psi$, then $\psi$ is $G$-invariant and since $G/F$ is cyclic of order $2$, $\psi$ extends to $\psi_0\in\Irr(G)$ by \cite[Corollary 11.22]{Isaacs}. Thus $F-U\subseteq \van(G)$. Therefore $G-U\subseteq \van(G)$ and $|G:U|=2s.$ Hence $\pv(G)\ge |G-U|/|G|=1-1/(2s)\ge 3/4\geq13/18.$

Assume that $\psi^h\ne \psi.$ Then $I_G(\psi)=F$ and so $\chi=\psi^G\in \Irr(G)$ and $\chi_F=\psi+\psi^h.$ We see that $\psi^h$ vanishes on $F-U^h.$  It follows that  $\chi(x)=\psi(x)+\psi^h(x)=0$ for every $x\in F-(U\cup U^h)$. Thus $F-(U\cup U^h)\subseteq \van(G)$. If $U^h=U,$ then $F-U\subseteq \van(G)$ and we get $\pv(G)\ge 13/18$ as in the previous paragraph. So, assume $U^h\neq U$. Then $F=UU^h$.  Therefore $$|F-(U\cup U^h)|=|F|-|U\cup U^h|=|F|-2|U|+|U\cap U^h|=|G|(s-1)^2/2s^2.$$ Since $|\van(G)|\ge |G-F|+|F-(U\cup U^h)|$,  we have $\pv(G)\ge 1/2+(s-1)^2/2s^2.$

Assume that $s=2$. Then $G/L$ is a $2$-group of order $8$. If $G/L$ is non-abelian, then $\pv(G)\ge \pv(G/L)\ge 3/4>13/18$ by Lemma \ref{lem2}. Assume that $G/L$ is abelian. Then $U\unlhd G$ and thus $U^h=U$, contradicting our assumption $U^h\neq U.$

Assume that $s\ge 3.$ Then $$ \frac{1}{2}+\frac{(s-1)^2}{2s^2}= \frac{1}{2}+\frac{1}{2}\left(1-\frac{1}{s}\right)^2\ge \frac{1}{2}+\frac{1}{2}\left(1-\frac{1}{3}\right)^2=\frac{13}{18}$$ and so $\pv(G)\ge 13/18$ as wanted.
\end{proof}

\begin{proof}[\textbf{Proof of Theorem \ref{th: classification 1/2}}]
Let $G$ be a finite group with $\pv(G)=1/2.$ We will show that $G/\Center(G)$ is a Frobenius group with kernel $F/\Center(G)$, where $|G:F|=2$ and $F$ is abelian. 

Obviously, $G$ is non-abelian. Let $N\unlhd G$ be maximal such that $G/N$ is non-abelian. It follows that $(G/N)'$ is the unique minimal normal subgroup of $G/N$. Write $\overline{G}=G/N.$ By Lemma \ref{lem:quotient}, we have $1/2=\pv(G)\ge \pv(\overline{G})$. By Theorem \ref{th:lower bound}, $\pv(\overline{G})\ge 1/2$ and thus $\pv(\overline{G})=1/2.$ It follows from Proposition \ref{prop:perfect derived subgroup} that  $\overline{G}$ is solvable.

 By Lemma \ref{lem:minimal nonabelian}, either $\overline{G}$ is a non-abelian $p$-group or a Frobenius group whose kernel is an elementary abelian $p$-group, where $p$ is a prime. 
If $\overline{G}$  is a  non-abelian $p$-group, then  $\pv(\overline{G})\ge 3/4$ by Lemma \ref{lem2} and thus $\pv(G)\ge \pv(\overline{G})\ge 3/4>1/2$, a contradiction. Thus  $\overline{G}$ is a Frobenius group. By Lemma \ref{lem3}, $\pv(G)\ge \pv(\overline{G})\ge(n-1)/n$, where $n$ is the order of the Frobenius complement of $\overline{G}$, say $\overline{T}$.  
Since $\pv(G)=1/2$ and $n\ge 2$, we deduce that $n=2.$ Let $\overline{F}$ be the Frobenius kernel of $\overline{G}$. Let $T,F\leq G$ be the full inverse images of $\overline{T},\overline{F}$ in $G$. So $G/N$ is a Frobenius group with kernel $F/N$ and  $|G:F|=2.$  By Lemma \ref{lem4}, $F$ is abelian. 

Write $F=U\times K$, where $U\in\Syl_2(F)$ and $K$ is a $2$-complement of $F$. Since $F\unlhd G$, both $U$ and $K$ are normal in $G$. As $|G:F|=2$, $G/K$ is a $2$-group. By Lemmas \ref{lem:quotient} and \ref{lem2}, $G/K$ is abelian. It follows that $[G,U]\leq K\cap U=1$ and hence $U\leq \Center(G)$. Write $G=F\langle h\rangle$ for some $2$-element $h\in G.$ As $|G:F|=2$, we deduce that $h^2\in U\leq \Center(G)$. Since $K$ is abelian and $\langle h\rangle$ acts coprimely on $K$, we have that $K=[K,\langle h\rangle]\times\Centralizer_K(h)$. Since $[K,h^2]=1$, $h$ inverts each element of $[K,\langle h\rangle]$; hence $\Center(G)=U\times \Centralizer_K(h)$ and $F=\Center(G)\times [K,\langle h\rangle]$; therefore, $G/\Center(G)$ is a Frobenius group with kernel $F/\Center(G)\cong [K,\langle h\rangle],$ where $F$ is abelian and $|G:F|=2$.

For the converse, assume that $G/\Center(G)$ is a Frobenius group with a Frobenius kernel $F/\Center(G)$, where $F$ is abelian and $|G:F|=2$. It follows that $|F/\Center(G)|$ is odd. Hence, if $U\in\Syl_2(F)$, then $U\leq\Center(G)$. Thus $\pv(G)=1/2$  by Lemma    \ref{lem:abelian}.
The proof is now complete.
\end{proof}

\begin{proof}[\textbf{Proof of Theorem \ref{th:small}}]

(i) Let $G$ be a finite group with $\pv(G)\le2/3$. We prove by induction on $|G|$ that $G$ is solvable. Let $N$ be a minimal normal subgroup of $G$. By Lemma \ref{lem:quotient}, we have $2/3\geq \pv(G)\ge \pv(G/N)$ and thus $\pv(G/N)\leq 2/3$. As $|G/N|<|G|$, by induction we deduce that $G/N$ is solvable. 

If $G$ has two distinct minimal normal subgroups, say $N_1$ and $N_2$, then $N_1\cap N_2=1$ and $G/N_i$ is solvable for $i=1,2.$ It follows that $G^{(\infty)}$, the last term of the derived series of $G$, lies in both $N_1$ and $N_2$. Since $N_1\cap N_2=1$, $G^{(\infty)}=1$ and $G$ is solvable. Thus $G$ has a unique minimal normal subgroup, say $M$, and  $G/M$ is solvable. If $M$ is solvable, then $G$ is solvable. So we assume that $M$ is non-solvable. 

If $G/M$ is abelian, then $G'=M$ is perfect and so by Proposition \ref{prop:perfect derived subgroup}, we have $\pv(G)>3/4>2/3$, which is a contradiction. Thus we may assume that $G/M$ is non-abelian. Since $G/M$ is non-abelian solvable, by Lemma \ref{lem:minimal nonabelian}, there exists normal subgroups $M\unlhd N\unlhd G$ of $G$ such that $G/N$ is a non-abelian $p$-group for some prime $p$ or a Frobenius group. If $G/N$ is a $p$-group, then $\pv(G/N)\ge (p^2-1)/p^2\ge 3/4$ by Lemma \ref{lem2}. But this would imply that $\pv(G)\ge \pv(G/N)\ge 3/4>2/3$, a contradiction. Thus $G/N$ is a Frobenius group with a Frobenius complement $H/N$ and kernel $F/N$.  
By Lemma \ref{lem3}, we have  $\pv(G/N)\ge (m-1)/m$, where $m=|H/N|\ge 2.$ If $m\ge 3$, then $\pv(G/N)\ge 2/3$. Moreover, as $N$ is non-solvable, $N\cap\van(G)\neq\emptyset$ by Lemma \ref{lem:defect 0} and so $\pv(G)>2/3$. Thus we can assume that $m=2$. Now by Lemma \ref{lem4}, we have $\pv(G/M)> 2/3$ and thus $\pv(G)>2/3$. This contradiction proves that $G$ is solvable.

\medskip

(ii) Assume that $\pv(G)<2/3.$ We show that $G$ is abelian or $\pv(G)=1/2.$ By part (i), $G$ is solvable. Assume that $G$ is non-abelian. Then $1/2\leq \pv(G)<2/3.$ Let $N\unlhd G$ be maximal such that $G/N$ is non-abelian. As $G/N$ is solvable, $G/N$ is a non-abelian $p$-group for some prime $p$ or $G/N$ is a Frobenius group with kernel $F/N$ and complement $H/N$ by Lemma \ref{lem:minimal nonabelian}. If the former case holds, then $\pv(G)\ge \pv(G/N)\ge 3/4>2/3$ by Lemma \ref{lem2}. Assume now that the latter case holds.
By Lemma \ref{lem3}, we have $\pv(G/N)\ge (m-1)/m$ with $m=|G:F|\ge 2.$ It follows that $m=|G:F|=2$ as $2/3>\pv(G)\ge \pv(G/N).$ By Lemma \ref{lem4}, $F$ is abelian. Finally, by Lemma \ref{lem:abelian}, $\pv(G)=1/2$ and the proof is now complete.
\end{proof}

\subsection*{Acknowledgment} The authors are grateful to  the referee for many helpful suggestions   and for shortening the proofs of Lemma $2.7$ and Theorem $1.3.$



\end{document}